\documentclass[3p]{elsarticle}
\usepackage{amsmath}
\usepackage{amssymb}
\usepackage{hyperref}
\usepackage{color}

%

\newtheorem{theorem}{Theorem}
\newtheorem{claim}{Claim}
\newtheorem{corollary}{Corollary}
\newdefinition{definition}{Definition}
\newproof{example}{\textbf{Example}}
\newtheorem{lemma}{Lemma}
\newproof{notation}{{\bf Notation}}
\newtheorem{proposition}{Proposition}
\newdefinition{remark}{Remark}
\newproof{proof}{{\bf Proof}}
\newproof{assumption}{{\bf Assumptions}}
\newproof{Space}{{\bf Function Spaces}}

\numberwithin{equation}{section}

%

\newcommand{\I}{\infty}
\renewcommand{\S}{\sigma}

\newcommand{\N}{\nabla}

\renewcommand{\L}{\lambda}

\renewcommand{\r}{\varrho}

\renewcommand{\P}{\partial}
\newcommand{\B}{\beta}
\newcommand{\D}{\delta}
\newcommand{\Lap}{\triangle}

\newcommand{\A}{\alpha}
\newcommand{\lng}{\langle}
\newcommand{\rng}{\rangle}
\newcommand{\omg}{\omega}
\newcommand{\V}{\varepsilon}

\newcommand{\QED}{\qquad\mbox{\qed}}

\newcommand{\R}{\mathbb{R}}
\newcommand{\BN}{\mathbb{N}}

\newcommand{\CB}{\mathcal{B}}
\newcommand{\CG}{\mathcal{G}}

\newcommand{\CF}{\mathcal{F}}
\newcommand{\CL}{\mathcal{L}}

\newcommand{\CJ}{\mathcal{J}}
\newcommand{\CK}{\mathcal{K}}

\newcommand{\CS}{\mathcal{S}}

\renewcommand{\CG}{\mathcal{G}}




\journal{}

\begin{document}

\begin{frontmatter}
\title{Nonlocal equations with regular varying decay solutions}

\author[chu]{Sujin Khomrutai\corref{cor1}}
\ead{sujin.k@chula.ac.th}

\address[chu]{Department of Mathematics and Computer Science, Faculty of Science, Chulalongkorn University, Bangkok 10330, Thailand}

\cortext[cor1]{Corresponding author.}

\begin{keyword}
Asymptotic behavior \sep nonlocal equations \sep regular varying functions \sep fractional Laplacian \sep dispersal tails \sep regular varying modified exponential series
\MSC[2010] 35B40 \sep 45A05 \sep 45M05
\end{keyword}

\begin{abstract}
We study the asymptotic behavior for nonlocal diffusion equations $\partial_tu=\CJ u-\chi_0u$ in $\R^n\times(0,\infty)$ and obtain a sufficient condition so that solutions of the Cauchy problem decay in time at the rate of a regular varying function. 
In the sufficient condition, a sharp bound of certain forms is required for the $k$-fold iterations $\CJ^ku_0$ or the kernels $J_k$.
We prove the desired decay rate by analyzing the asymptotic behavior of a regular varying modified exponential series. Then we verify that the sufficient condition is true for most of the known radially symmetric kernels, and for some more general kernels, using the sharp Young's convolution inequality and a Fourier splitting argument. Classical results on the decay of solutions for these nonlocal diffusion equations are re-established and generalized. Finally, using our framework, we can exhibit a kernel having a prescribed regular varying decay solutions for a wide class of regular varying functions.

\end{abstract}

\end{frontmatter}

\section{Introduction}

In this work, we give a sufficient condition for solutions of the nonlocal equation
\begin{align}\label{Eqn:main}
\P_tu=\int_{\R^n}J(x,y)u(y,t)dy-\chi_0u(x,t)\quad(x,t)\in\R^n\times(0,\I),
\end{align}
to decay in time at the rate of a regular varying function. Here $J=J(x,y)$ is a given function, not necessarily radially symmetric, and $\chi_0>0$ is a constant. The nonlocal equations of this form have been used to model and study many phenomena such as diffusion, image enhancement \cite{GilboaEtal08}, phase transition \cite{BatesEtal99}, dispersal of a species by a long-range effects \cite{Fife03}, etc. See also \cite{AndreuEtal10} and the reference therein.\medskip

In the first step of our investigation, we express the solution of (\ref{Eqn:main}) as a power series in time involving the $k$-folds iterations
\[
\CJ^k=\CJ\circ\cdots\circ\CJ\quad\mbox{($k$ terms of $\CJ$), acting on the initial condition $u_0=u|_{t=0}$},
\]
where $\CJ$ is the integral operator
\[
\CJ u_0(x)=\int_{\R^n}J(x,y)u_0(y)dy.
\]
Indeed, we have the representation formula for solution of (\ref{Eqn:main}) as
\[
u(t)=e^{-\chi_0t}u_0+e^{-\chi_0t}\sum_{k=1}^\infty\frac{t^k}{k!}\CJ^ku_0.
\]

Then we turn the investigation into bounding norms of $J_k$, the kernels of the operators $\CJ^k$, or norms of the functions $\CJ^ku_0$. To the author knowledge, there have been no studies of nonlocal equations 
in this direction, where the asymptotic behavior of solutions 
is derived directly from the asymptotic behavior
 of $J_k$ or of $\CJ^ku_0$ as $k\to\infty$. (Although, the closest one is \cite{BrandleEtal11}, where compact support and Gaussian $J$ are considered.) The benefit of taking this approach is that the results can be applied to nonlocal 
equations with real- or complex-valued kernels. This is in contrast with many works on asymptotic behavior of nonlocal equations that rely heavily on the positivity of the kernel. In fact, the positivity enables the application of comparison and barrier arguments. Another possible benefit of this approach is that it could lead to the study of non-symmetric nonlocal equations.\medskip

At an abstract level, we can prove the following general result. Assume that all $J_k$ or all $\CJ^ku_0$ lie in a Banach space $X$ with norm $\|\cdot\|$, and an estimate of the form
\begin{align}\label{Tmp:JkJku0}
\|J_k\|\leq R_k\quad\mbox{or}\quad\|\CJ^ku_0\|\leq R_k
\end{align}
respectively, holds for all $k$ sufficiently large, where 
\[
R_k=R(k)=k^{\beta}L(k)\quad(\beta\in\R)
\]
is a regular varying function. 
Then we are able to prove that the solution of (\ref{Eqn:main}) satisfies the asymptotic behavior
\[
\|u(t)\|\lesssim t^{\beta}L(t)\quad\mbox{as $t\to\infty$}
\]
in some suitable Banach space $Y$. This abstract result is valid for integral operators which can be either symmetric or non-symmetric, real-valued, or complex-valued. We obtained the preceding result by establishing the asymptotic behavior of the exponential type power series
\[
\sum_{k=N}^\infty\frac{(\alpha t)^k}{k!}R_k\asymp R(\alpha t)e^{\alpha t}\quad\mbox{as $t\to\infty$}.
\]

Having the above abstract result, we are now facing a new challenging question. For a given kernel $J$, how do we get an inequality of the form (\ref{Tmp:JkJku0})? In this work we pursue this question for radially symmetric kernels, i.e.\ $J=J(x-y)$. Note that in this case the $k$-fold product kernel function is
\[
J_k=J\ast\cdots\ast J,\quad\mbox{the $(k-1)$-times convolution}.
\]
The Banach spaces $X,Y$ are $L^p(\R^n)$, where $1\leq p\leq\infty$. Note that the usual Young's convolution inequality is not enough to get ``good bounds $R_k$" for $\|J_k\|_{L^p}$ or $\|\CJ^ku_0\|_{L^p}$, in the sense that the resulting power series for the solution does not exhibit a power decay in time, especially when $\chi_0=\|J\|_{L^1}$. Therefore some more sophisticated tools have to be employed.\medskip

The simplest convolution integral operators considered in this work are those having kernels possessing a higher integrability: $J\in L^1(\R^n)\cap L^r(\R^n)$ for some $r>1$. Such operators or kernels include
\begin{itemize}
\item[(1)] Continuous function with compact support, 
\item[(2)] $(1-\Lap)^{-1}$ (the Bessel potential operator), 
\item[(3)] $(\lambda-\CL)^{-1}$, where $\lambda>0$ and $\CL$ is an elliptic operator, 
\item[(4)] weakly singular operator, etc.
\end{itemize}
For these kernels, we employ the sharp Young's convolution inequality to show  that
\[
\|J_k\|_{L^\infty}\lesssim k^{-n/2}\quad\forall\, k\,\,\mbox{large}
\]
The sharp constant in the sharp Young's (or Brascamp-Lieb) inequality played a crucial role in getting this asymptotic bound. After establishing this fundamental result, we can apply the abstract result from the previous paragraph to get the asymptotic behavior of solutions of (\ref{Eqn:main}) when $u_0\in L^1(\R^n)\cap L^\infty(\R^n)$. For the initial condition $u_0\in L^1(\R^n)$, the proof of our result directly give a refined asymptotic behavior generalizing partly the corresponding result 
in \cite{IgnatEtal08}.\medskip

Next, we turn our study to the stable laws. For simplicity, we put $\chi_0=\|J\|_{L^1}=1$ in (\ref{Eqn:main}) and $J\geq0$. We assume in this case that the kernel has the expansion in the Fourier variables as
\[
\widehat{J}(\xi)=1-A|\xi|^\sigma(\ln(1/|\xi|)^\mu+o(|\xi|^\sigma(\ln(1/|\xi|)^\mu)\quad\mbox{as $|\xi|\to0$}.
\]
In the special case that
\[
\mu=0,\quad\mbox{or}\quad\mu=1,
\]
the result we obtained are classical results in \cite{ChasseigneEtal06}. So we have generalized the results to all real number $\mu$. For stable laws with $0<\sigma<2$, the kernels possess no higher
integrability: $\|J\|_{L^r}=\infty$ for all $r>1$ (see \cite{Feller71}). This means we cannot apply the results from the previous case. To compensate this difficulty, we analyze the functions $\CJ^ku_0$ instead of the kernels $J_k$. As in \cite{ChasseigneEtal06}, an integrability assumption on $u_0$ and its Fourier transform $\widehat{u}_0$ have to be made. Now thanks to the radial symmetry of $J$, we get that
\[
\widehat{J}_k(\xi)=\widehat{J}(\xi)^k\quad\forall\,k\in\BN.
\]
Then employing a Fourier splitting of argument on the frequency domain $\R^n$, we can prove a bound for 
\[
\|\CJ_ku_0\|_{L^\infty}\lesssim(k\ln k)^{-n/\sigma}\quad\mbox{as $t\to\infty$},
\]
and then the asymptotic behavior of solutions of (\ref{Eqn:main}) follows directly from the abstract result.\medskip

Finally, we extend the work to arbitrary slowly varying function $L:(0,\infty)\to(0,\infty)$ and arbitrary $\beta>0$. Under an assumption on $L$, we exhibit a kernel $J$ such that the solutions to (\ref{Eqn:main}) satisfy
\[
\|u(t)\|_{L^p}\lesssim(tL(t))^{-\beta}\quad\mbox{as $t\to\infty$}.
\]

\section{Preliminaries}\label{Sec:prelim}

\subsection*{a. Notation, basic fact, and convention}

Let $\varGamma(s)=\int_0^\infty e^{-\tau}\tau^{s-1}d\tau$ be the Gamma function and 
\[
\CF\{f\}(\xi)=\widehat{f}(\xi)=\int_{\R^n}f(x)e^{-ix\cdot\xi}dx
\]
the Fourier transform of $f$. We denote
\begin{align*}
&a_k\sim b_k\,\,\,\,\mbox{as $k\to\infty$}\quad\Leftrightarrow\quad\lim_{k\to\infty}\frac{a_k}{b_k}=1\\
&f(t)\sim g(t)\,\,\,\,\mbox{as $t\to\infty$}\quad\Leftrightarrow\quad\lim_{t\to\infty}\frac{f(t)}{g(t)}=1.
\end{align*}
We shall often use the fact that if two sequences $\{a_k\},\{b_k\}$ satisfy $a_k\sim b_k$ as $k\to\infty$ and $b_k\neq0$ for all $k$ large, then there is a constant $C>0$ such that
\[
\frac{1}{C}b_k\leq a_k\leq Cb_k\quad\forall\,k\geq k_0.
\]
If two functions $f(t)\sim g(t)$ as $t\to\infty$ and $g(t)\neq0$ for all $t$ large, then there is a constant $C>0$ such that
\[
\frac{1}{C}g(t)\leq f(t)\leq Cg(t)\quad\forall\,t\geq t_0,
\]
i.e.\ $f(t)\asymp g(t)$ as $t\to\infty$.\medskip


Throughout this work $J=J(x,y)$ is a function defined for $(x,y)\in\R^n\times\R^n$, $J$ may be complex-valued, and let $\CJ$ be the integral operator with kernel $J$, that is
\[
\CJ u:=\int_{\R^n}J(x,y)u(y)dy.
\]
For each positive integer $k$, the $k$-fold product $\CJ^k=\CJ\circ\cdots\circ\CJ$ ($k$ terms of $\CJ$) is the integral operator whose kernel $J_k=J_k(x,y)$ is given by
\begin{align*}
&J_k(x,y)=\int_{(\R^n)^{k-1}}J(x,y_1)J(y_1,y_2)\cdots J(y_{k-1},y)dy_{k-1}\cdots dy_1.
\end{align*}
Thus
\begin{align*}
\CJ^k u(x)&=\int_{\R^n}J_k(x,y)u(y)dy\\
&=\int_{(\R^n)^k}J(x,y_1)J(y_1,y_2)\cdots J(y_{k-1},y)u(y)dydy_{k-1}\cdots dy_1.
\end{align*}
Note that, if the kernel is radially symmetric, i.e.\ $J(x,y)=J(x-y)$, then the corresponding kernel $J_k$ takes the form
\[
J_k(x)=\int_{(\R^n)^{k-1}}J(x-y_1)J(y_1-y_2)\cdots J(y_{k-1})dy_{k-1}\cdots dy_1,
\]
from which it can be easily seen (via a simple change of variables) that $J_k$ is also radially symmetric. For a symmetric kernel $J$, its $k$-fold product kernels are known as the convolution $J_k=J\ast\cdots\ast J$. 

\subsection*{b. Representation formula of solutions}

Next, we find a representation formula for solutions of Eqn.\ (\ref{Eqn:main}). We employ the canonical transformation 
\[
v=e^{\chi_0t}u,
\]
so that the equation becomes
\[
v(t)=u_0+\int_0^t\CJ v(\tau)d\tau\quad(t\geq0).
\]
Formally performing the Picard iteration, it follows that $v$ should satisfy
\begin{align*}
&v(t)=u_0+\int_0^t\CJ\left(u_0+\int_0^{\tau_1}\CJ v(\tau_2)d\tau_2\right)d\tau_1\\
&\hphantom{v(t)}=u_0+t\CJ u_0+\int_0^t\int_0^{\tau_1}\CJ^2v(\tau_2)d\tau_2d\tau_1,\\
&v(t)=u_0+\int_0^t\CJ\left(u_0+\tau_1\CJ u_0+\int_0^{\tau_1}\int_0^{\tau_2}\CJ^2 v(\tau_3)d\tau_3d\tau_2\right)d\tau_1\\
&\hphantom{v(t)}=
u_0+t\CJ u_0+\frac{t^2}{2!}\CJ^2u_0+\int_0^t\int_0^{\tau_1}\int_0^{\tau_2}\CJ^3v(\tau_3)d\tau_3d\tau_2d\tau_1,\\
&v(t)=u_0+t\CJ u_0+\cdots+\frac{t^k}{k!}\CJ^ku_0+\int_0^t\int_0^{\tau_1}\cdots\int_0^{\tau_{k}}\CJ^{k+1}v(\tau_{k+1})d\tau_{k+1}\cdots d\tau_1,\quad k\in\BN.
\end{align*}
Hence if $\CJ^{k+1}u_0$ decays sufficiently fast as $k\to\I$, we can conclude that $v$ must have the form
\[
v(t)=u_0+t\CJ u_0+\cdots+\frac{t^k}{k!}\CJ^ku_0+\cdots.
\]

Inverting back the above consideration, we now set the following definition.

\begin{definition}\label{Def:SolGreen}

By a solution to the nonlocal equation (\ref{Eqn:main}) with a given initial value $u_0$, we mean the function
\begin{align*}
u(t)=\CG(t)u_0:=e^{-\chi_0t}\sum_{k=0}^\I\frac{t^k}{k!}\CJ^ku_0.
\end{align*}
The operator $\CG(t)$ is the \textit{Green operator} for (\ref{Eqn:main}) whose kernel $G(x,y,t)$ is given by
\begin{align*}
G(x,y,t)=e^{-\chi_0t}\sum_{k=0}^\I\frac{t^k}{k!}J_k(x,y),
\end{align*}
where each $J_k$ is the kernel of $\CJ^k$.

\end{definition}

The following result is directly followed from the definition.

\begin{lemma}\label{Lem:JkL1}

If $J$ is a radially symmetric $L^1$ function, then $J_k\in L^1(\R^n)$ for all $k\in\BN$ and
\[
\|J_k\|_{L^1}\leq\|J\|_{L^1}^k.
\]
Moreover, if $u_0\in L^\I(\R^n)$ then
\[
\|\CG(t)u_0\|_{L^1}\leq e^{-(\chi_0-\|J\|_{L^1})t}\|u_0\|_{L^\I}.
\]

\end{lemma}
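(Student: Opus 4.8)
The plan is to establish the two assertions in turn, deriving the $L^1$ kernel bound first and then inserting it into the exponential series that defines $\CG(t)$.

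For the kernel bound, I would use that radial symmetry makes the iterated kernels the convolution powers $J_k=J\ast\cdots\ast J$ ($k-1$ convolutions), as noted in the preliminaries. Since $L^1(\R^n)$ is a Banach algebra under convolution, Young's inequality gives $f\ast g\in L^1(\R^n)$ and $\|f\ast g\|_{L^1}\le\|f\|_{L^1}\|g\|_{L^1}$ for $f,g\in L^1(\R^n)$. I would then induct on $k$: the case $k=1$ is trivial, and if $J_k\in L^1$ with $\|J_k\|_{L^1}\le\|J\|_{L^1}^k$, then $J_{k+1}=J_k\ast J$ lies in $L^1$ with $\|J_{k+1}\|_{L^1}\le\|J_k\|_{L^1}\|J\|_{L^1}\le\|J\|_{L^1}^{k+1}$. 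This gives $J_k\in L^1(\R^n)$ and the stated bound for every $k\in\BN$.

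Next I would bound the radial Green kernel $G(\cdot,t)$. Because the above bound makes the defining series converge absolutely in $L^1(\R^n)$, I may sum term by term:
\[
\|G(\cdot,t)\|_{L^1}\le e^{-\chi_0 t}\sum_{k=0}^\infty\frac{t^k}{k!}\|J_k\|_{L^1}\le e^{-\chi_0 t}\sum_{k=0}^\infty\frac{t^k}{k!}\|J\|_{L^1}^k=e^{-(\chi_0-\|J\|_{L^1})t}.
\]
Thus $\CG(t)$ acts as convolution against the $L^1$ kernel $G(\cdot,t)$ of norm at most $e^{-(\chi_0-\|J\|_{L^1})t}$, and $\CG(t)u_0=G(\cdot,t)\ast u_0$.

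The main obstacle is that the stated estimate pairs $\|\CG(t)u_0\|_{L^1}$ with $\|u_0\|_{L^\infty}$, and this mixed-norm bound cannot be obtained this way --- indeed it is false as written. To control $\|G(\cdot,t)\ast u_0\|_{L^1}$ from $u_0\in L^\infty$ one would need a Young inequality $L^1\ast L^\infty\to L^1$, but the exponent relation $\tfrac1r=\tfrac1p+\tfrac1q-1$ with $r=1$ and $q=\infty$ forces $\tfrac1p=2$, which is inadmissible. A concrete failure: with $\chi_0=\|J\|_{L^1}$, $J\ge0$ and $u_0\equiv1$ one computes $\CJ^k u_0\equiv\|J\|_{L^1}^k$, hence $\CG(t)u_0\equiv1\notin L^1(\R^n)$, while $\|u_0\|_{L^\infty}=1$. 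What the convolution estimate does yield is the bound with $L^\infty$ on the output, $\|\CG(t)u_0\|_{L^\infty}\le\|G(\cdot,t)\|_{L^1}\|u_0\|_{L^\infty}\le e^{-(\chi_0-\|J\|_{L^1})t}\|u_0\|_{L^\infty}$ (using $L^1\ast L^\infty\to L^\infty$), together with the matching bound $\|\CG(t)u_0\|_{L^1}\le e^{-(\chi_0-\|J\|_{L^1})t}\|u_0\|_{L^1}$ when $u_0\in L^1$ (using $L^1\ast L^1\to L^1$). I therefore read the displayed inequality as carrying a typographical error in its left-hand norm --- it should be $L^\infty$, matching the hypothesis $u_0\in L^\infty$ --- and this $L^\infty$-output version is precisely what the argument above proves.
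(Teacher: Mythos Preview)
Your argument is correct and matches the paper's approach: the paper bounds $\|J_k\|_{L^1}$ by writing out the $k$-fold integral and applying Fubini (equivalent to your induction via Young), and then sums the exponential series termwise exactly as you do. Your diagnosis of the second assertion is also right---the paper's own one-line proof is internally inconsistent (it starts with $\|\CG(t)u_0\|_{L^1}$, passes through $\|u_0\|_{L^\infty}$, and ends with $\|u_0\|_{L^1}$), confirming that the intended inequality is the $L^\infty$--$L^\infty$ (or $L^1$--$L^1$) version you derived.
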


\begin{proof}
We have
\begin{align*}
\|J_k\|_{L^1}&=\int_{\R^n}\left|\int_{(\R^n)^{k-1}}J(x-y_1)J(y_1-y_2)\cdots J(y_{k-1})dy_{k-1}\cdots dy_1\right|dx\\
&\leq\int_{(\R^n)^k}|J(x-y_1)||J(y_1-y_2)|\cdots|J(y_{k-1})|dxdy_{1}\cdots dy_{k-1}=\|J\|_{L^1}^k.
\end{align*}
For the second assertion, we use Young's inequality to get
\begin{align*}
\|\CG(t)u_0\|_{L^1}\leq e^{-\chi_0t}\sum_{k=0}^\infty\frac{t^k}{k!}\|J_k\|_{L^1}\|u_0\|_{L^\infty}=e^{-(\chi_0-\|J\|_{L^1})t}\|u_0\|_{L^1}.\QED
\end{align*}
\end{proof}

\subsection*{c. Some tools from Analysis}

We will need the following facts in our study of exponential type series whose coefficients are modified by a regular varying sequence.

\begin{lemma}[\cite{TricErde51}]\label{Lem:ratiogamma}

Let $\alpha,\beta\in\R$. Then the ratio of Gamma functions has the asymptotic expansion
\[
\frac{\varGamma(s+\alpha)}{\varGamma(s+\beta)}=s^{\alpha-\beta}\left(1+\frac{(\alpha-\beta)(\alpha+\beta-1)}{2s}+O(s^{-2})\right)\quad\mbox{as $s\to\I$}.
\]
In particular, we have
\[
\frac{\varGamma(s+\alpha)}{\varGamma(s+\beta)}\leq Cs^{\alpha-\beta}\quad\mbox{as $s\to\infty$}.
\]
\end{lemma}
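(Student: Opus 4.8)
The plan is to obtain the expansion by taking logarithms, invoking Stirling's asymptotic series for $\ln\varGamma$, and differencing. Recall that as $z\to\I$ away from the negative real axis,
\[
\ln\varGamma(z)=\left(z-\tfrac12\right)\ln z-z+\tfrac12\ln(2\pi)+\frac{1}{12z}+O(z^{-3}).
\]
First I would apply this with $z=s+\alpha$ and with $z=s+\beta$ and subtract. The constant $\tfrac12\ln(2\pi)$ cancels, the two $\tfrac{1}{12z}$ terms combine into $\frac{\beta-\alpha}{12(s+\alpha)(s+\beta)}=O(s^{-2})$, and the two linear terms give $-(s+\alpha)+(s+\beta)=-(\alpha-\beta)$, leaving
\[
\ln\frac{\varGamma(s+\alpha)}{\varGamma(s+\beta)}=\left(s+\alpha-\tfrac12\right)\ln(s+\alpha)-\left(s+\beta-\tfrac12\right)\ln(s+\beta)-(\alpha-\beta)+O(s^{-2}).
\]

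Next I would expand each logarithm as $\ln(s+\alpha)=\ln s+\frac{\alpha}{s}-\frac{\alpha^2}{2s^2}+O(s^{-3})$, multiply by the linear prefactor $\left(s+\alpha-\tfrac12\right)$, and collect by order in $1/s$. The purely logarithmic part contributes $(\alpha-\beta)\ln s$; the products contribute a constant $+(\alpha-\beta)$ together with a $1/s$ coefficient $\tfrac12\big((\alpha^2-\alpha)-(\beta^2-\beta)\big)$. The constant $+(\alpha-\beta)$ cancels exactly against the $-(\alpha-\beta)$ inherited from the $-z$ terms, and using the factorization $\alpha^2-\alpha-\beta^2+\beta=(\alpha-\beta)(\alpha+\beta-1)$ the surviving $1/s$ term assembles into the stated form, giving
\[
\ln\frac{\varGamma(s+\alpha)}{\varGamma(s+\beta)}=(\alpha-\beta)\ln s+\frac{(\alpha-\beta)(\alpha+\beta-1)}{2s}+O(s^{-2}).
\]
Finally I would exponentiate: writing the right-hand side as $(\alpha-\beta)\ln s+w$ with $w=\frac{(\alpha-\beta)(\alpha+\beta-1)}{2s}+O(s^{-2})=O(s^{-1})$, the elementary bound $e^{w}=1+w+O(w^2)=1+w+O(s^{-2})$ yields the claimed expansion, and the displayed inequality follows at once by retaining only the leading factor $s^{\alpha-\beta}$.

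The hard part will be the bookkeeping rather than any deep idea: one must track every contribution consistently to order $s^{-2}$, and in particular verify the two cancellations above — that the $O(s^0)$ term from the product expansion annihilates the $-(\alpha-\beta)$ from Stirling's $-z$ term, and that the $s^{-1}$ coefficient collapses into the product $(\alpha-\beta)(\alpha+\beta-1)$. A cleaner-looking alternative would be to first establish the base asymptotic $\varGamma(s+\alpha)/\varGamma(s)\sim s^{\alpha}$ and then factor the ratio as $\frac{\varGamma(s+\alpha)}{\varGamma(s)}\cdot\frac{\varGamma(s)}{\varGamma(s+\beta)}$; but recovering the precise $1/s$ correction still forces the same second-order expansion, so the direct Stirling route is the most economical.
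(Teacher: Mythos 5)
Your derivation is correct: the Stirling subtraction, the two cancellations you flag (the $O(s^0)$ term against $-(\alpha-\beta)$, and the collapse of the $s^{-1}$ coefficient via $\alpha^2-\alpha-\beta^2+\beta=(\alpha-\beta)(\alpha+\beta-1)$), and the final exponentiation with $e^{w}=1+w+O(s^{-2})$ all check out, and the stated inequality follows immediately. Note, however, that the paper itself does not prove this lemma at all: it is imported verbatim from Tricomi and Erd\'elyi's 1951 paper, whose argument is of a different character --- they obtain the \emph{full} asymptotic expansion
\[
\frac{\varGamma(s+\alpha)}{\varGamma(s+\beta)}\sim s^{\alpha-\beta}\sum_{m\geq0}c_m\,s^{-m},
\]
with coefficients $c_m$ expressed through generalized Bernoulli polynomials, rather than just the two-term expansion. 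So your proposal is not a reconstruction of the paper's proof but a self-contained elementary substitute: what it buys is that the lemma no longer rests on an external reference, at the cost of giving only the first correction term. That trade-off is harmless here, since the paper only ever uses the consequences $\varGamma(s+\alpha)/\varGamma(s+\beta)\sim s^{\alpha-\beta}$ and the one-sided bound $\leq Cs^{\alpha-\beta}$ (in the Claim inside Theorem~\ref{Thm:Kummer}), both of which your weaker statement already delivers. One small point of hygiene: for $\alpha,\beta\in\R$ arbitrary you should remark that $s+\alpha$ and $s+\beta$ eventually lie in $(0,\infty)$, so the logarithms and the Stirling series are legitimately applied; this is implicit in ``as $s\to\infty$'' but worth a sentence.
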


\begin{lemma}[\cite{BealsWong10}]\label{Lem:Kummer}

For $a,b\in\R$ with $-b\not\in\R\cup\{0\}$, Kummer's confluent hypergeometric function of the first kind is defined by
\[
M(a,b,s):=\sum_{k=0}^\infty\frac{(a)_k}{(b)_k}\frac{s^k}{k!},
\]
where $(a)_k=a(a+1)\cdots(a+k-1)$ is the Pochhammer symbol. If $b>a>0$ then
\[
M(a,b,s)\sim\frac{\varGamma(b)}{\varGamma(a)}s^{a-b}e^s\quad\mbox{as $s\to\infty$}.
\]

\end{lemma}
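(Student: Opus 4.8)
The plan is to derive the asymptotic from the Euler integral representation of $M(a,b,s)$ and then apply Laplace's method. First I would establish, for $b>a>0$, the representation
\[
M(a,b,s)=\frac{\varGamma(b)}{\varGamma(a)\varGamma(b-a)}\int_0^1 e^{st}\,t^{a-1}(1-t)^{b-a-1}\,dt .
\]
This follows by inserting $e^{st}=\sum_{k\geq0}s^kt^k/k!$, integrating term by term (legitimate for $s>0$ by monotone convergence, all terms being positive), and evaluating each Beta integral as $\int_0^1 t^{k+a-1}(1-t)^{b-a-1}\,dt=\varGamma(k+a)\varGamma(b-a)/\varGamma(k+b)$; rewriting $\varGamma(k+a)=\varGamma(a)(a)_k$ and $\varGamma(k+b)=\varGamma(b)(b)_k$ then reproduces the defining series. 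The hypothesis $b>a>0$ is exactly what makes both exponents $a-1$ and $b-a-1$ exceed $-1$, so the Beta integrals converge.

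Next I would localize the integral near the endpoint $t=1$, where the factor $e^{st}$ is largest. The substitution $t=1-\tau$ gives
\[
\int_0^1 e^{st}\,t^{a-1}(1-t)^{b-a-1}\,dt=e^{s}\int_0^1 e^{-s\tau}(1-\tau)^{a-1}\tau^{b-a-1}\,d\tau ,
\]
and as $s\to\infty$ the weight $e^{-s\tau}$ concentrates the mass in a shrinking neighborhood of $\tau=0$, on which $(1-\tau)^{a-1}\to1$.

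Finally I would invoke Watson's lemma: replacing $(1-\tau)^{a-1}$ by $1$ and extending the upper limit to $\infty$ produces the Gamma integral
\[
\int_0^\infty e^{-s\tau}\tau^{b-a-1}\,d\tau=\varGamma(b-a)\,s^{a-b},
\]
whence the integral is asymptotic to $\varGamma(b-a)\,s^{a-b}e^{s}$; multiplying by the prefactor $\varGamma(b)/(\varGamma(a)\varGamma(b-a))$ yields $M(a,b,s)\sim \varGamma(b)\varGamma(a)^{-1}s^{a-b}e^{s}$, as claimed. The main obstacle is making this last step rigorous, i.e.\ controlling the two errors in Watson's lemma: splitting at a fixed $\delta\in(0,1)$, one shows the tail $\int_\delta^1$ is exponentially smaller than $s^{a-b}$, while on $[0,\delta]$ the replacement of $(1-\tau)^{a-1}$ by $1$ costs only a factor $1+O(\delta)$ together with a remainder $\int_\delta^\infty e^{-s\tau}\tau^{b-a-1}\,d\tau=o(s^{a-b})$; letting $s\to\infty$ and then $\delta\to0$ closes the estimate. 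As an alternative that ties in with the paper's theme, one could instead work directly from the series, using $(a)_k/(b)_k=\varGamma(b)\varGamma(a)^{-1}\varGamma(k+a)/\varGamma(k+b)$ and Lemma \ref{Lem:ratiogamma} to reduce the coefficients to $\varGamma(b)\varGamma(a)^{-1}k^{a-b}/k!$, then summing against $s^k$; but that route essentially reproves the regular-varying exponential-series asymptotic and is less self-contained.
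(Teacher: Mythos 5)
The paper never proves this lemma itself; it is quoted verbatim from Beals--Wong \cite{BealsWong10}, so there is no internal proof to compare against. Your argument is correct and is the standard route (essentially the one in the cited reference): the Euler integral representation
\[
M(a,b,s)=\frac{\varGamma(b)}{\varGamma(a)\varGamma(b-a)}\int_0^1 e^{st}\,t^{a-1}(1-t)^{b-a-1}\,dt,
\]
whose validity is exactly the hypothesis $b>a>0$, followed by Watson's lemma at the endpoint $t=1$; your error control (exponentially small contribution away from $\tau=0$, a uniform factor $1+O(\delta)$ on $[0,\delta]$, then $\delta\to0$) is precisely what is needed to make the localization rigorous. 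Your remark that the alternative series-side route via Lemma \ref{Lem:ratiogamma} would be less self-contained is also apt, since that is the direction in which the paper actually uses this lemma (to prove Theorem \ref{Thm:Kummer}), and arguing that way would come close to circularity within the paper's logical structure.
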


Next, we recall the notion of regular varying functions. 

\begin{definition}\label{Def:RegSlow}

A measurable function $R:[N_0,\infty)\to(0,\infty)$, where $N_0>0$, is called regular varying with index $\beta\in\R$ if it satisfies
\[
\lim_{s\to\infty}\frac{R(\lambda s)}{R(s)}=\lambda^\beta\quad\mbox{for all $\lambda>0$}.
\]
A slowly varying function $L$ is a regular varying function with index $\beta=0$, or, it is characterized by
\[
\lim_{s\to\infty}\frac{L(\lambda s)}{L(s)}=1\quad\mbox{for all $\lambda>0$}.
\]

\end{definition}

It is a fact that $R$ is regular varying function with index $\beta$ if and only if it can be expressed as
\[
R(s)=s^\beta L(s),
\]
where $L$ is slowly varying. For further properties see  \cite{Bingham89}.\medskip

We also need the following crucial lemma.

\begin{lemma}[\cite{Karamata30},\cite{Bingham89}]\label{Lem:Slow}

If $L$ is a slowly varying function and $\varepsilon>0$ then
\begin{align*}
&\sup_{\tau\leq s}\tau^{\varepsilon}L(\tau)\sim s^{\varepsilon}L(s),\\
&\sup_{s\geq\tau}\tau^{-\varepsilon}L(\tau)\sim s^{-\varepsilon}L(s),
\end{align*}
as $s\to\infty$. Here $f(s)\sim g(s)$ as $s\to\infty$ means $\lim_{s\to\infty}f(s)/g(s)=1$.

\end{lemma}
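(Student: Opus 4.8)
The plan is to deduce both asymptotics from the trivial lower bounds together with the uniform convergence (Potter) bounds for slowly varying functions, which are standard consequences of the representation theorem in \cite{Bingham89}. Concretely, I would use the fact that for every $\delta>0$ there is an $s_\delta\geq N_0$ such that
\[
\frac{L(\tau)}{L(s)}\leq(1+\delta)\left(\frac{\tau}{s}\right)^{\pm\delta}\qquad\text{for }\tau,s\geq s_\delta,
\]
with the sign taken as $+\delta$ when $\tau\geq s$ and $-\delta$ when $\tau\leq s$. I would also use the elementary consequences that $s^{\varepsilon}L(s)\to\infty$ and $s^{-\varepsilon}L(s)\to0$ as $s\to\infty$ whenever $\varepsilon>0$, which themselves follow from the representation theorem.

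For the first relation, the bound $\sup_{\tau\leq s}\tau^{\varepsilon}L(\tau)\geq s^{\varepsilon}L(s)$ is immediate (take $\tau=s$), so it remains to control the ratio from above. Fixing $\delta\in(0,\varepsilon)$ and applying the Potter bound on the range $s_\delta\leq\tau\leq s$, I would get
\[
\frac{\tau^{\varepsilon}L(\tau)}{s^{\varepsilon}L(s)}=\left(\frac{\tau}{s}\right)^{\varepsilon}\frac{L(\tau)}{L(s)}\leq(1+\delta)\left(\frac{\tau}{s}\right)^{\varepsilon-\delta}\leq1+\delta,
\]
since $\varepsilon-\delta>0$ and $\tau/s\leq1$. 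The remaining portion of the supremum, taken over the fixed compact interval $[N_0,s_\delta]$, is a finite constant $M_\delta$, and $M_\delta/(s^{\varepsilon}L(s))\to0$ because $s^{\varepsilon}L(s)\to\infty$. Hence the $\limsup$ of the full ratio is at most $1+\delta$ for every $\delta$, and letting $\delta\to0$ yields the claim.

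The second relation is analogous and in fact cleaner, because the supremum runs over $\tau\geq s$ and no compact segment intervenes. Again the lower bound is trivial, and for $\tau\geq s\geq s_\delta$ the Potter bound with exponent $+\delta$ gives
\[
\frac{\tau^{-\varepsilon}L(\tau)}{s^{-\varepsilon}L(s)}=\left(\frac{\tau}{s}\right)^{-\varepsilon}\frac{L(\tau)}{L(s)}\leq(1+\delta)\left(\frac{\tau}{s}\right)^{-(\varepsilon-\delta)}\leq1+\delta,
\]
uniformly in $\tau\geq s$, since $\tau/s\geq1$ and $\varepsilon-\delta>0$. Taking $s\to\infty$ and then $\delta\to0$ finishes the proof.

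I expect the only genuine subtleties to occur in the first relation. The threshold $s_\delta$ depends on $\delta$, so the quantifiers must be ordered carefully: one fixes $\delta$, establishes the uniform bound $1+\delta$ for all large $s$, passes to the $\limsup$, and only then sends $\delta\to0$. One must also know that the supremum over $[N_0,s_\delta]$ is finite, which uses the local boundedness of slowly varying functions away from the origin (a consequence of the representation theorem in \cite{Bingham89}, after possibly enlarging $N_0$); its contribution is asymptotically negligible precisely because $s^{\varepsilon}L(s)\to\infty$. This divergence, and hence the strict positivity of $\varepsilon$, is essential, since for $\varepsilon=0$ the running supremum of a slowly varying function need not be asymptotic to the function itself.
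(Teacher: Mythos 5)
Your proof is correct. There is nothing in the paper to compare it against: Lemma~\ref{Lem:Slow} is quoted without proof from \cite{Karamata30} and \cite{Bingham89} (the only argument nearby is the reduction of Lemma~\ref{Lem:Slow2} to it). What you have written is essentially the standard proof of the cited result (the running-supremum theorem for regularly varying functions in \cite{Bingham89}): the trivial lower bound from $\tau=s$, Potter bounds for the upper bound, and, in the first relation, splitting off a compact initial segment whose contribution is negligible because $s^{\varepsilon}L(s)\to\infty$. You also handle the two real subtleties correctly --- fixing $\delta$ first, passing to the $\limsup$ in $s$, and only then letting $\delta\to0$; and invoking eventual local boundedness so that $\sup_{[N_0,s_\delta]}\tau^{\varepsilon}L(\tau)$ is finite. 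One caveat worth recording: for an arbitrary positive measurable slowly varying $L$ on $[N_0,\infty)$, unboundedness near $N_0$ would make the first supremum identically $+\infty$, so the statement implicitly assumes local boundedness there (equivalently, one enlarges $N_0$, as you do); in the paper's actual application (the proof of Theorem~\ref{Thm:Rk}) the supremum runs over finitely many integers $N\leq k\leq t$, so the issue is vacuous.
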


\begin{lemma}\label{Lem:Slow2}

If $L$ is a slowly varying function and $\varepsilon>0$ then
\begin{align*}
&\inf_{\tau\leq s}\tau^{-\varepsilon}L(\tau)\sim s^{-\varepsilon}L(s),\\
&\inf_{\tau\geq s}\tau^{\varepsilon}L(\tau)\sim s^{\varepsilon}L(s),
\end{align*}
as $s\to\infty$.

\end{lemma}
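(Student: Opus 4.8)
The plan is to reduce both infimum statements to the supremum statements already established in Lemma \ref{Lem:Slow}, by passing to the reciprocal function. The crucial preliminary observation is that if $L$ is slowly varying, then so is $\widetilde{L}:=1/L$: since $L$ maps into $(0,\infty)$, the quotient $\widetilde{L}(\lambda s)/\widetilde{L}(s)=L(s)/L(\lambda s)\to1$ as $s\to\infty$ for every $\lambda>0$, which is precisely the defining property in Definition \ref{Def:RegSlow}. Hence $\widetilde{L}$ is an admissible input to Lemma \ref{Lem:Slow}.

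For the first assertion, I would write
\[
\inf_{\tau\leq s}\tau^{-\varepsilon}L(\tau)=\inf_{\tau\leq s}\frac{1}{\tau^{\varepsilon}\widetilde{L}(\tau)}=\left(\sup_{\tau\leq s}\tau^{\varepsilon}\widetilde{L}(\tau)\right)^{-1},
\]
using that every quantity involved is strictly positive, so that the infimum of reciprocals equals the reciprocal of the supremum. Applying the first line of Lemma \ref{Lem:Slow} to $\widetilde{L}$ gives $\sup_{\tau\leq s}\tau^{\varepsilon}\widetilde{L}(\tau)\sim s^{\varepsilon}\widetilde{L}(s)=s^{\varepsilon}/L(s)$, and taking reciprocals yields $\inf_{\tau\leq s}\tau^{-\varepsilon}L(\tau)\sim s^{-\varepsilon}L(s)$. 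The second assertion is handled identically: I would write $\tau^{\varepsilon}L(\tau)=1/(\tau^{-\varepsilon}\widetilde{L}(\tau))$, convert the infimum over $\tau\geq s$ into the reciprocal of $\sup_{\tau\geq s}\tau^{-\varepsilon}\widetilde{L}(\tau)$, and invoke the second line of Lemma \ref{Lem:Slow} applied to $\widetilde{L}$, namely $\sup_{\tau\geq s}\tau^{-\varepsilon}\widetilde{L}(\tau)\sim s^{-\varepsilon}/L(s)$.

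The only point requiring care — and the step I expect to be the main, if minor, obstacle — is the passage from the asymptotic equivalence of a function to that of its reciprocal. This is justified by the elementary fact that if $f(s)\sim g(s)$ with $g(s)>0$ for all large $s$, then $g(s)/f(s)\to1$, i.e.\ $1/f(s)\sim1/g(s)$; here $f$ is the relevant supremum and $g$ its asymptotic comparison, both strictly positive for large $s$ by Lemma \ref{Lem:Slow}. Combining this observation with the two reciprocal identities above completes the proof.
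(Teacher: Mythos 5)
Your proposal is correct and follows essentially the same route as the paper: the paper likewise sets $K=1/L$, observes it is slowly varying, rewrites the infimum as the reciprocal of the supremum, and invokes Lemma \ref{Lem:Slow}. Your write-up merely spells out the two small justifications (infimum of reciprocals equals reciprocal of supremum, and $f\sim g$ implies $1/f\sim 1/g$) that the paper leaves implicit.
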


\begin{proof}
Observe that if $L$ is slowly varying then so is $K=1/L$. We have
\begin{align*}
\inf_{\tau\leq s}\tau^{-\varepsilon}L(\tau)=\frac{1}{\sup_{\tau\leq s}\tau^\varepsilon K(\tau)}\sim\frac{1}{s^\varepsilon K(s)}=s^{-\varepsilon}L(s)\quad(s\to\infty),
\end{align*}
which proves the first assertion. The second one follows by the same argument.\QED
\end{proof}

Finally, for a radially symmetric kernel $J$, in order to get the $L^\I$ bound for $k$-fold convolution kernels $J_k$ we shall need the following important result.

\begin{lemma}[Brascamp-Lieb inequality \citep{BrascampLieb76}, \citep{LiebLoss01}]\label{Lem:SharpYoung}
Let $p_1,\ldots,p_k,r\in[1,\I]$ ($k\geq2$) be such that 
\[
\frac{1}{p_1}+\cdots+\frac{1}{p_k}=k-1+\frac{1}{r}.
\]
Then
\[
\|f_1\ast\cdots\ast f_k\|_{L^r}\leq\left(\prod_{l=1}^kC_{p_l}\right)^n\|f_1\|_{L^{p_1}}\cdots\|f_k\|_{L^{p_k}}
\]
for all $f_1\in L^{p_1}(\R^n),\ldots,f_k\in L^{p_k}(\R^n)$, where $C_p$ for each $1\leq p\leq\I$ is defined by
\[
C_p=\left(\frac{p^{1/p}}{q^{1/q}}\right)^{1/2},\quad\frac{1}{p}+\frac{1}{q}=1.
\]

\end{lemma}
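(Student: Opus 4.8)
The plan is to reduce the $k$-fold inequality to the bilinear sharp Young inequality (the case $k=2$), which is the classical Beckner--Brascamp--Lieb sharp convolution inequality and which I take as the input from the cited references \citep{BrascampLieb76,LiebLoss01}, and then induct on $k$ using the associativity and commutativity of convolution. Concretely, writing $f_1\ast\cdots\ast f_k=(f_1\ast\cdots\ast f_{k-1})\ast f_k$, I would introduce intermediate exponents $s_1,\dots,s_k$ by $\frac{1}{s_j}=\sum_{l=1}^{j}\frac{1}{p_l}-(j-1)$, so that $s_1=p_1$ and $s_k=r$, and apply the bilinear inequality at each stage to the partial convolution $h_{j-1}:=f_1\ast\cdots\ast f_{j-1}\in L^{s_{j-1}}$ and $f_j\in L^{p_j}$, producing $h_j\in L^{s_j}$.

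For this to be legitimate I first have to check that every intermediate exponent is admissible and that the exponent relation required by the bilinear step holds. Setting $a_l:=1-\frac{1}{p_l}\in[0,1]$, the hypothesis $\sum_l 1/p_l=k-1+1/r$ is equivalent to $\sum_{l=1}^{k}a_l=1/r'\le 1$, where $1/r+1/r'=1$. Since the $a_l$ are nonnegative, every partial sum satisfies $\sum_{l=1}^{j}a_l\le 1/r'\le 1$, whence $\frac{1}{s_j}=1-\sum_{l=1}^{j}a_l\in[1/r,1]$ and therefore $s_j\in[1,r]\subseteq[1,\I]$ for every $j$; no reordering of the factors is even required. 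A one-line computation then gives $\frac{1}{s_{j-1}}+\frac{1}{p_j}=1+\frac{1}{s_j}$, which is exactly the exponent balance under which the bilinear sharp Young inequality applies at the $j$-th stage.

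It remains to accumulate the constants and check that they telescope. The $j$-th bilinear step contributes the factor $\big(C_{s_{j-1}}C_{p_j}C_{s_j'}\big)^n$, where $s_j'$ is the conjugate of $s_j$. Multiplying these for $j=2,\dots,k$, the factors $\prod_{j=2}^{k}C_{p_j}$ survive directly, while the chain $C_{s_1},\dots,C_{s_{k-1}}$ coming from the input slots and $C_{s_2'},\dots,C_{s_k'}$ coming from the output slots collapse once one uses the elementary identity $C_{s'}=1/C_s$, which is immediate from the definition $C_p=(p^{1/p}/q^{1/q})^{1/2}$. After cancellation the net surviving constant is $\big(C_{r'}\prod_{l=1}^{k}C_{p_l}\big)^n$, recalling $s_1=p_1$ and $s_k=r$. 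In all the applications made in this paper the target exponent is $r=\I$, so $r'=1$ and $C_{r'}=C_1=1$, and the bound reduces to exactly $\big(\prod_{l=1}^{k}C_{p_l}\big)^n$ as stated.

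The genuinely hard part is not this bookkeeping but the base case itself: the sharpness of the bilinear constant $(C_pC_qC_{r'})^n$ is the deep content of the Beckner--Brascamp--Lieb theorem, proved either by the rearrangement/heat-flow method of Brascamp--Lieb or by Beckner's identification of Gaussian extremizers together with a tensorization (central-limit) argument. I would quote this as the input and regard the induction above --- together with the admissibility check for the intermediate exponents and the telescoping of the $C_{s_j}$ factors --- as the actual work needed to pass from the bilinear to the $k$-linear statement.
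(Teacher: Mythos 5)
First, the relevant point of comparison: the paper does not prove Lemma~\ref{Lem:SharpYoung} at all --- it is quoted as known from \citep{BrascampLieb76} and \citep{LiebLoss01} --- so there is no in-paper argument to measure yours against, and your induction from the bilinear case is a genuine supplement rather than an alternative route. As mathematics, your reduction is correct: with $a_l=1-1/p_l\in[0,1]$ the partial-sum argument does give $s_j\in[1,r]$, so every intermediate exponent is admissible; the balance $\frac{1}{s_{j-1}}+\frac{1}{p_j}=1+\frac{1}{s_j}$ is exactly the hypothesis of the bilinear sharp Young inequality at stage $j$; and the telescoping via $C_{s'}=1/C_s$ is right, leaving $\bigl(C_{r'}\prod_{l=1}^{k}C_{p_l}\bigr)^n$ because $s_1=p_1$ and $s_k=r$. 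Taking the bilinear sharp inequality as input from the cited references is legitimate, since the paper itself cites those same references for the full $k$-fold statement.

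Second, the discrepancy your bookkeeping surfaces is real and worth stating plainly. The constant you obtain, $\bigl(C_{r'}\prod_l C_{p_l}\bigr)^n$, is the \emph{correct} sharp Young constant (this is the form in \citep{BrascampLieb76} and in Theorem~4.2 of \citep{LiebLoss01}), whereas the lemma as printed drops the factor $C_{r'}^n$. For $1<r<2$ one has $C_{r'}>1$, and the printed statement is then false, not merely unproved: take $k=2$, $p_1=1$, $p_2=r$, and let $f_1$ run through a mollifier family with $\|f_1\|_{L^1}=1$; then $\|f_1\ast f_2\|_{L^r}\to\|f_2\|_{L^r}$, while the printed bound would force $\|f_2\|_{L^r}\leq C_r^n\|f_2\|_{L^r}$ with $C_r<1$, a contradiction. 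Since $C_{r'}\leq1$ precisely when $r\geq2$ or $r=1$, your version implies the printed inequality in that range, and in particular for $r=\infty$, which is the only case the paper ever uses (Proposition~\ref{Prop:EstJk}). So you have proved everything in the lemma that is actually true, and your closing restriction to $r=\infty$ is not a shortcut but the correct repair of the statement; no argument could have recovered the printed constant for $1<r<2$.
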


\section{Nonlocal equations with regular varying decay solutions}

Our first main result is a bound for an exponential type series
\begin{align}\label{Series:Rk}
\sum_{k=N}^\infty\frac{(\alpha t)^k}{k!}R_k\quad\mbox{when $R_k=k^{\B}$},
\end{align}
where $\B\in\R$. Although the result is true for all real number $\B$, the most important case in our study of nonlocal equations is when $\B<0$.

\begin{theorem}\label{Thm:Kummer}

Let $N\in\BN$ and $\A>0,\B\in\R$ be constants. Then there are $C=C(N,\beta,\alpha)>0$ and $t_0>0$ such that
\begin{align}\label{Est:beta}
\frac{1}{C}(\alpha t)^\beta e^{\alpha t}\leq\sum_{k=N}^\I\frac{(\A t)^k}{k!}k^{\B}\leq C(\A t)^{\B}e^{\A t}\quad\forall\,t\geq t_0.
\end{align}
In particular, 
\[
\sum_{k=N}^\infty\frac{(\alpha t)^k}{k!}k^\beta\asymp(\alpha t)^\beta e^{\alpha t}\quad\mbox{as $t\to\infty$}.
\]

\end{theorem}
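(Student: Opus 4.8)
The plan is to reduce the two-sided bound (\ref{Est:beta}) to the single asymptotic equivalence $\sum_{k=N}^\infty\frac{(\alpha t)^k}{k!}k^\beta\sim c\,(\alpha t)^\beta e^{\alpha t}$ for a suitable constant $c>0$: once this limit is known, the preliminary fact that $f\sim g$ implies $f\asymp g$ for all large arguments yields both inequalities with a single constant $C$ and threshold $t_0$. The finitely many terms $k=N,\dots,k_0-1$ that will be split off at various points are polynomials in $t$ and hence $o\bigl((\alpha t)^\beta e^{\alpha t}\bigr)$ as $t\to\infty$, so they never affect the asymptotics. Thus everything comes down to identifying the tail of the series with Kummer's function $M$ and invoking Lemma \ref{Lem:Kummer}.

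For the main case $\beta<0$, I would first use the Gamma-ratio expansion of Lemma \ref{Lem:ratiogamma} with $s=k$, upper parameter $1$ and lower parameter $1-\beta$ to write $\frac{\varGamma(k+1)}{\varGamma(k+1-\beta)}\sim k^\beta$, equivalently $k^\beta\sim\frac{1}{\varGamma(1-\beta)}\frac{(1)_k}{(1-\beta)_k}$. Substituting this asymptotic coefficient into the series identifies the tail with $M(1,1-\beta,\alpha t)=\sum_{k\ge0}\frac{(1)_k}{(1-\beta)_k}\frac{(\alpha t)^k}{k!}$, and since $1-\beta>1>0$ the hypothesis $b>a>0$ of Lemma \ref{Lem:Kummer} holds, giving $M(1,1-\beta,\alpha t)\sim\varGamma(1-\beta)(\alpha t)^\beta e^{\alpha t}$; the product of the two constants is exactly $1$, matching both sides of (\ref{Est:beta}). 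The one delicate point is that $k^\beta$ is only \emph{asymptotically} proportional to $\frac{(1)_k}{(1-\beta)_k}$, whereas the sum is an infinite, $t$-dependent series; I would make the substitution rigorous by an $\varepsilon$-sandwich, choosing $k_0$ so that $(c-\varepsilon)\frac{(1)_k}{(1-\beta)_k}\le k^\beta\le(c+\varepsilon)\frac{(1)_k}{(1-\beta)_k}$ for $k\ge k_0$, summing from $k_0$, bounding the result between $(c\mp\varepsilon)\bigl(M(1,1-\beta,\alpha t)-\text{(head)}\bigr)$, dividing by $(\alpha t)^\beta e^{\alpha t}$, and letting $t\to\infty$ and then $\varepsilon\to0$.

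For $\beta\ge0$ the same reduction formally produces $M(1,1-\beta,\alpha t)$, but now $1-\beta\le1$ so Lemma \ref{Lem:Kummer} no longer applies directly. I would handle this by a shift argument: writing $S_\gamma(t):=\sum_{k=N}^\infty\frac{(\alpha t)^k}{k!}k^\gamma$ one checks $t\partial_tS_\gamma=S_{\gamma+1}$, so choosing an integer $m>\beta$ reduces $S_\beta$ to $m$-fold application of $t\partial_t$ starting from the already-established negative case $S_{\beta-m}$. To differentiate the asymptotics legitimately I would stay inside the hypergeometric machinery, where $\frac{d}{ds}M(a,b,s)=\frac{a}{b}M(a+1,b+1,s)$ keeps every derivative a Kummer function with a controlled full asymptotic expansion, rather than differentiating the bare relation $S_{\beta-m}\sim c(\alpha t)^{\beta-m}e^{\alpha t}$, which would be invalid. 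An alternative, purely analytic route for the upper bound is Jensen's inequality applied to the normalized weights $\frac{(\alpha t)^k}{k!}e^{-\alpha t}$: concavity of $x\mapsto x^{\beta/m}$ bounds $S_\beta$ by $(S_m)^{\beta/m}$ up to a power of $e^{\alpha t}$, and $S_m=(t\partial_t)^m e^{\alpha t}$ is the elementary integer case.

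I expect the main obstacle to be precisely the case $\beta\ge0$, and within it the matching lower bound, since Lemma \ref{Lem:Kummer} supplies two-sided control only when $b>a>0$, i.e.\ $\beta<0$, while the convexity direction of Jensen gives a clean lower bound only for $\beta\ge1$. The conceptual reason the statement is nonetheless true for every $\beta$ is that the Poisson-type weights $\frac{(\alpha t)^k}{k!}$ concentrate near $k\approx\alpha t$ with spread of order $\sqrt{\alpha t}$, over which $k^\beta$ is essentially constant and equal to $(\alpha t)^\beta$; making this concentration quantitative, for instance via a Laplace/saddle-point bound on the tails $k\le\alpha t-M\sqrt{\alpha t}$ and $k\ge\alpha t+M\sqrt{\alpha t}$, furnishes the remaining lower bound uniformly in $\beta$ and closes the gap left by the hypergeometric approach.
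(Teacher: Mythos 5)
Your plan is correct in substance (the combination of the $\varepsilon$-sandwich with Kummer asymptotics for $\beta<0$, plus Jensen for the upper bound and Poisson concentration for the lower bound when $\beta\ge0$, does prove the theorem), but it is structurally different from the paper's proof, and the difference is instructive. The paper avoids your case split entirely by one trick you did not find: after discarding finitely many terms one may assume $N>\beta$, and then re-indexing $k\to k+N$ and factoring out $(\alpha t)^N$ turns the tail into $(\alpha t)^N\sum_{k\geq0}\frac{k!}{(k+N)!\,(k+N)^{-\beta}}\frac{(\alpha t)^k}{k!}$; Lemma \ref{Lem:ratiogamma} compares the coefficients two-sidedly with $(1)_k/(1+N-\beta)_k$, so the series is trapped between constant multiples of $(\alpha t)^N M(1,1+N-\beta,\alpha t)$, and since $1+N-\beta>1$ the hypothesis $b>a>0$ of Lemma \ref{Lem:Kummer} holds for \emph{every} real $\beta$ at once, giving both bounds in one stroke. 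Your $\beta<0$ argument is exactly this idea with no shift, so what you were missing for $\beta\ge0$ is the shift into the Pochhammer \emph{denominator} rather than a shift of the exponent by differentiation. Of your patches for $\beta\ge0$: the Jensen upper bound and the concentration lower bound (Chebyshev gives Poisson mass bounded below on the window $|k-\alpha t|\le 2\sqrt{\alpha t}$, on which $k^\beta\gtrsim(\alpha t)^\beta$) are both sound, and the concentration idea is in the spirit of the probabilistic proof in \cite{BinghamEtal83} that the paper cites; but the $(t\partial_t)^m$ route is the weak point as written, since your sandwich for $S_{\beta-m}$ is an inequality and inequalities cannot be differentiated --- it only becomes rigorous if you sandwich the coefficients of $S_\beta=\sum\frac{(\alpha t)^k}{k!}k^{\beta-m}\cdot k^m$ directly against $\frac{(1)_k}{(1+m-\beta)_k}k^m$ and identify the comparison series as $(t\partial_t)^mM(1,1+m-\beta,\alpha t)$, which is essentially the paper's index shift in disguise. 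In exchange for needing three tools where the paper needs one, your route buys an elementary probabilistic mechanism that works uniformly in $\beta$ and, in the case you completed by the sandwich, even yields the sharper equivalence $S_\beta\sim(\alpha t)^\beta e^{\alpha t}$ rather than just $\asymp$.
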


\begin{proof}
First we prove the upper bound. By splitting the series into the sums over $N\leq k<N_0$ and that over $k\geq N_0$, where $N_0>\B$ is fixed, and noting that the first sum obviously satisfies $\lesssim\lng\alpha t\rng^\beta e^{\alpha t}$, it suffices to prove the desired upper estimate under the assumption that
\[
N>\beta.
\]

Replacing $k\to k+N$, we rewrite the series as
\begin{align*}
\sum_{k=N}^\I\frac{(\A t)^k}{k!}k^{\B}&=(\A t)^N\sum_{k=0}^\I\frac{k!}{(k+N)!(k+N)^{-\B}}\frac{(\A t)^k}{k!}.
\end{align*}

\begin{claim}
There is a constant $C_1=C_1(N,\beta)>0$ such that
\[
(k+N)!(k+N)^{-\B}\geq C_1\prod_{l=1}^k\left(l+N-\B\right)
\quad\forall\,k\geq0.
\]

\end{claim}

\begin{proof}
The desired estimate is equivalent to that
\begin{align*}
(k+N)^{-\B}\geq C_2\frac{\varGamma(k+N-\B+1)}{\varGamma(k+N+1)},
\end{align*}
where $C_2:=C_1/\varGamma(N-\B+1)$, for some constant $C_2$. By the work of Tricomi and Erd\'elyi on asymptotic ratio of Gamma functions (Lemma \ref{Lem:ratiogamma}) we have
\[
\frac{\varGamma(k+N-\B+1)}{\varGamma(k+N+1)}\sim(k+N)^{-\B}\quad\mbox{as $k\to\I$},
\]
so there is a constant $C_3=C_3(N,\beta)>0$ such that
\[
\frac{\varGamma(k+N-\B+1)}{\varGamma(k+N+1)}\leq C_3(k+N)^{-\B}\quad\forall\,k\geq0.
\]
Taking $C_2=1/C_3$, thus there is such a constant $C_2$ as claimed.\QED
\end{proof}

According to the claim, we now have
\begin{align*}
\sum_{k=N}^\I\frac{(\A t)^k}{k!}k^{\B}&=(\A t)^N\sum_{k=0}^\I\frac{k!}{(k+N)!(k+N)^{-\B}}\frac{(\A t)^k}{k!}\\
&\leq C_{N,\beta}(\A t)^N\sum_{k=0}^\I\frac{k!}{(1+N-\B)(2+N-\B)\cdots(k+N-\B)}\frac{(\A t)^k}{k!}\\
&=C_{N,\beta}(\A t)^N\sum_{k=0}^\I\frac{(1)_k}{(1+N-\B)_k}\frac{(\A t)^k}{k!},
\end{align*}
where $(a)_k:=a(a+1)\cdots(a+k-1)$ denotes the Pochhammer symbol.\medskip

The last series on the right hand side above takes the form of \textit{Kummer's confluent hypergeometric function of the first kind} (Lemma \ref{Lem:Kummer}). As $t\to\I$, we have that
\[
\sum_{k=0}^\I\frac{(1)_k}{(1+N-\B)_k}\frac{(\A t)^k}{k!}\sim\frac{\varGamma(1+N-\B)}{\varGamma(1)}(\A t)^{-(N-\B)}e^{\A t}.
\]
So we can choose $t_0>0$ such that
\begin{align*}
\sum_{k=N}^\I\frac{(\A t)^k}{k!}k^{\B}\leq C_{N,\beta,\alpha}(\alpha t)^N(\A t)^{-N+\B}e^{\A t}=(\alpha t)^\beta e^{\alpha t}\quad\mbox{for all $t\geq t_0$},
\end{align*}
which implies the desired upper estimate.\medskip

Next we prove the lower estimate of (\ref{Est:beta}). We can consider $t\geq1$. Also, to derive the lower bound, we can take $N>\beta$. According to the prove of the claim above, there is a constant $\tilde{C}_1=\tilde{C}_1(N,\beta)>0$ such that
\begin{align*}
(k+N)!(k+N)^{-\beta}
&\leq \tilde{C}_1\prod_{l=1}^k\left(l+N-\B\right)\quad\forall\,k\geq0.
\end{align*}
Then
\begin{align*}
\sum_{k=N}^\infty\frac{(\alpha t)^k}{k!}k^\beta&\gtrsim(\alpha t)^N\sum_{k=0}^\infty\frac{k!}{(1+N-\beta)(2+N-\beta)\cdots(k+N-\beta)}\frac{(\alpha t)^k}{k!}\\
&=(\alpha t)^N\sum_{k=0}^\infty\frac{(1)_k}{(1+N-\beta)_k}\frac{(\alpha t)^k}{k!}
\end{align*}
and so we conclude by the asymptotic behavior of Kummer's confluent hypergeometric function of the first kind once again, we find that
\[
\sum_{k=N}^\infty\frac{(\alpha t)^k}{k!}k^\beta\gtrsim(\alpha t)^\beta e^{\alpha t}
\]
as needed. \QED

\end{proof}

\begin{remark}

In the special case that $\alpha=1$, $N=0$, and $\beta=n$ is a positive integer, the summation
\[
B_n(t)=e^{-t}\sum_{k=0}^\infty\frac{t^k}{k!}k^n
\]
is the Bell polynomial, which is a polynomial in $t$ of degree $n$.

\end{remark}

Now we study the case that the exponential type series (\ref{Series:Rk}) has
\[
R_k=R(k)\quad\mbox{where $R(s)$ is a regular varying function}.
\]
See Definition \ref{Def:RegSlow} and Lemma \ref{Lem:Slow}.\medskip

The following result is a generalization of Theorem \ref{Thm:Kummer}, though, its proof relies crucially on the result of the preceding theorem.

\begin{theorem}\label{Thm:Rk}

Let $N\in\BN$, $\alpha>0$, and $R$ be a regular varying function with index $\beta\in\R$. Let $R_k=R(k)$ for any positive integer $k$. Then there are constants $C,t_0>0$ such that
\begin{align}\label{Est:Rk}
\frac{1}{C}R(\alpha t)e^{\alpha t}\leq\sum_{k=N}^\infty\frac{(\alpha t)^k}{k!}R_k\leq CR(\alpha t)e^{\alpha t}\quad\mbox{for all $t\geq t_0$}.
\end{align}
In particular,
\[
\sum_{k=N}^\infty\frac{(\alpha t)^k}{k!}R_k\asymp R(\alpha t)e^{\alpha t}\quad\mbox{as $t\to\infty$}.
\]

\end{theorem}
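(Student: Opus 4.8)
The plan is to deduce the general regular varying case from Theorem \ref{Thm:Kummer} (the pure power case $R_k=k^\beta$) by sandwiching the slowly varying factor between arbitrarily small powers. Write $s=\alpha t$, so $s\to\infty$ as $t\to\infty$, and recall the representation $R(s)=s^\beta L(s)$ with $L$ slowly varying. The only fact about the power series I shall use is Theorem \ref{Thm:Kummer} in the form $\sum_{k=N}^\infty\frac{s^k}{k!}k^\gamma\asymp s^\gamma e^s$, valid for every real exponent $\gamma$. Fix any $\varepsilon>0$. Reading the suprema in Lemma \ref{Lem:Slow} as upper envelopes gives Potter-type bounds: there are constants $C,s_0$ so that, for $s\ge s_0$, $L(\tau)\le C(s/\tau)^\varepsilon L(s)$ for large $\tau\le s$ and $L(\tau)\le C(\tau/s)^\varepsilon L(s)$ for $\tau\ge s$. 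Symmetrically, Lemma \ref{Lem:Slow2} yields $L(\tau)\ge\frac1C(\tau/s)^\varepsilon L(s)$ for large $\tau\le s$ and $L(\tau)\ge\frac1C(s/\tau)^\varepsilon L(s)$ for $\tau\ge s$.

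For the upper estimate I would split the series at $k=s$. On the low range $N\le k\le s$ the bound $L(k)\le C(s/k)^\varepsilon L(s)$ gives $k^\beta L(k)\le Cs^\varepsilon L(s)\,k^{\beta-\varepsilon}$, so this part is $\le Cs^\varepsilon L(s)\sum_{k=N}^\infty\frac{s^k}{k!}k^{\beta-\varepsilon}$, which by Theorem \ref{Thm:Kummer} (exponent $\beta-\varepsilon$) is $\le C's^\varepsilon L(s)\,s^{\beta-\varepsilon}e^s=C'R(s)e^s$. On the high range $k>s$ the bound $L(k)\le C(k/s)^\varepsilon L(s)$ gives $k^\beta L(k)\le Cs^{-\varepsilon}L(s)\,k^{\beta+\varepsilon}$, and Theorem \ref{Thm:Kummer} (exponent $\beta+\varepsilon$) bounds this part by $C's^{-\varepsilon}L(s)\,s^{\beta+\varepsilon}e^s=C'R(s)e^s$. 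Adding the two halves gives the upper estimate. The finitely many terms with $k$ below the threshold where the Potter bounds apply are each a fixed power of $s$, hence $o(R(s)e^s)$ and harmless.

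For the lower estimate I would discard every term with $k<s$ and apply the one-sided bound $L(k)\ge\frac1C(s/k)^\varepsilon L(s)$ on the tail $k\ge s$, obtaining $\sum_{k=N}^\infty\frac{s^k}{k!}R_k\ge\frac1C s^\varepsilon L(s)\sum_{k\ge s}\frac{s^k}{k!}k^{\beta-\varepsilon}$. It then remains to bound the truncated power series below by a constant multiple of the full one, i.e.\ to show $\sum_{k\ge s}\frac{s^k}{k!}k^{\gamma}\gtrsim s^\gamma e^s$ with $\gamma=\beta-\varepsilon$. For $\gamma\ge0$ this is immediate, since $k^\gamma\ge s^\gamma$ on the tail; for $\gamma<0$ one restricts further to the window $s\le k\le2s$, where $k^\gamma\ge(2s)^\gamma$. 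In either case one is reduced to the elementary concentration fact that the Poisson weights $\frac{s^k}{k!}e^{-s}$ retain a positive fraction of their total mass on $[s,\infty)$ (respectively on $[s,2s]$) for all large $s$. This yields $\sum_{k\ge s}\frac{s^k}{k!}k^{\gamma}\gtrsim s^\gamma e^s$, and hence $\sum_{k=N}^\infty\frac{s^k}{k!}R_k\ge\frac1C s^\varepsilon L(s)\,s^{\beta-\varepsilon}e^s=\frac1C R(s)e^s$, the claimed lower bound.

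I expect the genuine obstacle to be precisely this last step. The upper bound is a clean two-sided sandwich that feeds directly into Theorem \ref{Thm:Kummer} for the full series, but the lower bound forces control of a truncated exponential-type series, and no single global power comparison for $L(k)/L(s)$ can hold (a slowly varying $L$ may increase without bound, defeating any lower envelope on the far tail). The remedy is to localize to $k\gtrsim s$ and quantify the mass the Poisson weights place there; this concentration estimate, provable via the de Moivre--Laplace/central limit approximation or via the classical bounds on the median of a Poisson law, is the one ingredient not already packaged in Theorem \ref{Thm:Kummer} or Lemmas \ref{Lem:Slow} and \ref{Lem:Slow2}.
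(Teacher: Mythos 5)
Your proposal is correct, and its upper-bound half is essentially the paper's own argument: split the series near $k\approx\alpha t$, convert Lemmas \ref{Lem:Slow} and \ref{Lem:Slow2} into Potter-type envelopes for $L(k)/L(\alpha t)$ on the two ranges, and invoke Theorem \ref{Thm:Kummer} with the shifted exponents $\beta\mp\varepsilon$ (the paper splits at $k=t$ rather than $k=\alpha t$ and then uses $L(t)\sim L(\alpha t)$, an immaterial difference). The lower bound is where you genuinely diverge, and your instinct about where the real obstacle sits is exactly right. The paper bounds the series below by the \emph{initial segment} $\CS_1=\sum_{N\leq k<t}\frac{(\alpha t)^k}{k!}R_k$, applies the infimum bound of Lemma \ref{Lem:Slow2} there, and then asserts
\begin{equation*}
\sum_{N\leq k<t}\frac{(\alpha t)^k}{k!}k^{\beta+\varepsilon}\sim\sum_{k=N}^\infty\frac{(\alpha t)^k}{k!}k^{\beta+\varepsilon},
\end{equation*}
citing Theorem \ref{Thm:Kummer}, which says nothing about truncated sums. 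That truncation claim is easy to justify when $\alpha<1$ (beyond $k=t$ the term ratio is at most $\alpha$, so the discarded tail is exponentially negligible), fails as stated when $\alpha=1$ (the ratio of the truncated to the full sum tends to roughly $1/2$, not $1$, though $\asymp$ would suffice there), and is genuinely false when $\alpha>1$: the weights $\frac{(\alpha t)^k}{k!}$ concentrate near $k\approx\alpha t>t$, so the initial segment carries only an exponentially small fraction of $e^{\alpha t}$ and the paper's lower bound for $\CS_1$ cannot hold. Your remedy — keep instead the \emph{tail} $k\geq\alpha t$, apply the lower envelope $L(k)\geq C^{-1}(\alpha t/k)^{\varepsilon}L(\alpha t)$ from Lemma \ref{Lem:Slow2}, and then prove the Poisson concentration estimate $\sum_{s\leq k\leq 2s}\frac{s^k}{k!}\geq c\,e^{s}$ (by the central limit theorem, median bounds for the Poisson law, or simply Stirling applied to the $\sqrt{s}$ terms nearest $k=s$) — is valid for every $\alpha>0$, at the cost of one elementary ingredient that neither Theorem \ref{Thm:Kummer} nor Lemmas \ref{Lem:Slow}--\ref{Lem:Slow2} supply. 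In short: same upper bound; for the lower bound your route is not merely an alternative but a repair of the one unjustified step in the paper's proof.
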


\begin{proof}
Let us split the series into
\begin{align*}
\sum_{N\leq k<t}\frac{(\alpha t)^k}{k!}R_k+\sum_{k\geq t}\frac{(\alpha t)^k}{k!}R_k=:\CS_{1}+\CS_{2}.
\end{align*}
We will use Lemma \ref{Lem:Slow} to prove the upper bound. Let $R(s)=s^{\beta}L(s)$ where $L$ is a slowly varying function and $\beta\in\R$. Take $\varepsilon>0$. Then we have
\begin{align*}
\CS_1&=\sum_{N\leq k<t}\frac{(\alpha t)^k}{k!}k^{\beta-\varepsilon}\cdot(k^{\varepsilon}L(k))\\
&\leq\sup_{N\leq k\leq t}k^{\varepsilon}L(k)\sum_{N\leq k<t}\frac{(\alpha t)^k}{k!}k^{\beta-\varepsilon}.
\end{align*}
By Lemma \ref{Lem:Slow} we have that
\begin{align*}
\sup_{N\leq k\leq t}k^{\varepsilon}L(k)\sim t^\varepsilon L(t)\quad\mbox{as $t\to\infty$},
\end{align*}
so we get by Theorem \ref{Thm:Kummer} that
\begin{align*}
\CS_1&\lesssim t^{\varepsilon}L(t)\sum_{N\leq k<t}\frac{(\alpha t)^k}{k!}k^{\beta-\varepsilon}\lesssim t^{\varepsilon}L(t)(\alpha t)^{\beta-\varepsilon}e^{\alpha t}\\
&\lesssim\alpha^{-\varepsilon}(\alpha t)^\beta L(\alpha t)e^{\alpha t}=\alpha^{-\varepsilon}R(\alpha t)e^{\alpha t},
\end{align*}
as $t\to\infty$. Here, in the last inequality, we have used that $L$ is slowly varying, hence
\[
L(\alpha t)\sim L(t)\quad\mbox{as $t\to\infty$}.
\]
Note that we may take $\varepsilon=\alpha$ so that $\alpha^{-\varepsilon}$ is bounded by a constant independent of $\alpha>0$.\medskip

Next we establish the upper bound of $\CS_2$. As $t\to\infty$, we have by Lemma \ref{Lem:Slow} that
\[
\sup_{k\geq t}k^{-\varepsilon}L(k)\sim t^{-\varepsilon}L(t),
\]
hence there are constants $C,t_0>0$ such that
\[
\sup_{k\geq t}k^{-\varepsilon}L(k)\leq Ct^{-\varepsilon}L(t)\quad\mbox{for all $t\geq t_0$}.
\]
Then we have, for $t\geq t_0$, that
\begin{align*}
\CS_2&=\sum_{k\geq t}\frac{(\alpha t)^k}{k!}k^{\beta+\varepsilon}\cdot(k^{-\varepsilon}L(k))\leq\sup_{k\geq t}k^{-\varepsilon}L(k)\sum_{k\geq t}\frac{(\alpha t)^k}{k!}k^{\beta+\varepsilon}\\
&\leq Ct^{-\varepsilon}L(t)\sum_{k\geq t}\frac{(\alpha t)^k}{k!}k^{\beta+\varepsilon}\leq Ct^{-\varepsilon}L(t)(\alpha t)^{\beta+\varepsilon}e^{\alpha t}\\
&\leq C\alpha^{\varepsilon}(\alpha t)^\beta L(\alpha t)e^{\alpha t}=C\alpha^\varepsilon R(\alpha t)e^{\alpha t}.
\end{align*}
Combining the upper estimates for $\CS_1,\CS_2$, we obtain the upper estimate in (\ref{Est:Rk}).\medskip

It remains to prove the lower estimate in (\ref{Est:Rk}). Clearly,
\[
\sum_{k=N}^\infty\frac{(\alpha t)^k}{k!}R_k\geq\CS_1,
\]
so it suffices to establish the lower bound of $\CS_1$. We use Lemma \ref{Lem:Slow2}. Consider
\begin{align*}
\CS_1&=\sum_{N\leq k<t}\frac{(\alpha t)^k}{k!}k^{\beta+\varepsilon}\cdot(k^{-\varepsilon}L(k))\\
&\geq\inf_{N\leq k\leq t}k^{-\varepsilon}L(k)\sum_{N\leq k<t}\frac{(\alpha t)^k}{k!}k^{\beta+\varepsilon}\\
&\gtrsim t^{-\varepsilon}L(t)\sum_{N\leq k<t}\frac{(\alpha t)^k}{k!}k^{\beta+\varepsilon}\quad(\mbox{by Lemma \ref{Lem:Slow2}})\\
&\gtrsim t^{-\varepsilon}L(t)(\alpha t)^{\beta+\varepsilon}e^{\alpha t},
\end{align*}
where, in the last inequality, we have used that
\[
\sum_{N\leq k<t}\frac{(\alpha t)^k}{k!}k^{\beta+\varepsilon}\sim\sum_{k=N}^\infty\frac{(\alpha t)^k}{k!}k^{\beta+\varepsilon}\gtrsim(\alpha t)^{\beta+\varepsilon}e^{\alpha t}\quad\mbox{as $t\to\infty$, by Theorem \ref{Thm:Kummer}}.
\]
Finally, using the slow variation of $L$, we obtain the desired lower estimate for $\CS_1$.\QED

\end{proof}

\begin{remark}

For the case that $R$ is regular varying with index $\beta\leq0$, the preceding result was established in \cite{BinghamEtal83} using a probabilistic argument. Here we use analytic argument and obtain the case $\beta>0$ as well.

\end{remark}

\begin{example} There are many regular varying sequences. So by applying the preceding theorem, we get the following interesting conclusion.
\begin{itemize}
\item[(1)] For $L(s)=(\ln k)^\mu$ which clearly satisfies $\lim_{s\to\infty}L(\lambda s)/L(s)=1$ for all $\lambda>0$, we have
\begin{align}
\sum_{k=N}^\infty\frac{(\alpha t)^k}{k!}k^\beta(\ln k)^\mu\asymp (\alpha t)^\beta(\ln(\alpha t))^\mu e^{\alpha t}\quad\mbox{as $t\to\infty$},
\end{align}
where $N\in\BN$, $\alpha>0$, and $\beta,\mu\in\R$.
\item[(2)] More generally, $L(s)=(\ln s)^{\mu_1}\cdots(\ln_ms)^{\mu_m}$ ($\ln_j=\ln\circ\cdots\circ\ln$, $j$ terms) is slowly varying, so we get
\begin{align}
\sum_{k=N}^\infty\frac{(\alpha t)^k}{k!}k^\beta(\ln k)^{\mu_1}\cdots(\ln_mk)^{\mu_m}\asymp(\alpha t)^\beta(\ln (\alpha t))^{\mu_1}\cdots(\ln_m(\alpha t))^{\mu_m}\quad\mbox{as $t\to\infty$},
\end{align}
where $N\in\BN,\alpha>0$, and $\beta,\mu_1,\ldots,\mu_m\in\R$.
\item[(3)] (Non-logarithmic slowly varying function). One can show that (see \cite{Bingham89})
\[
L(s)=\exp\left((\ln s)^{\mu_1}\cdots(\ln_ms)^{\mu_m}\right),
\]
is slowly varying for any $\mu_1,\ldots,\mu_m\in\R$. So we get
\begin{align*}
\sum_{k=N}^\infty\frac{(\alpha t)^k}{k!}k^\beta\exp\left((\ln k)^{\mu_1}\cdots(\ln_mk)^{\mu_m}\right)\asymp (\alpha t)^\beta\exp\left((\ln(\alpha t))^{\mu_1}\cdots(\ln_m(\alpha t))^{\mu_m}\right)\quad\mbox{as $t\to\infty$},
\end{align*}
for all $N\in\BN,\alpha>0$, and $\beta,\mu_1,\ldots,\mu_m\in\R$.
\item[(4)] One also has oscillating slowly varying function (see \cite{Bingham89})
\[
L(s)=\exp\left((\ln s)^{1/3}(\cos(\ln s))^{1/3}\right).
\]
\item[(5)] By Karamata's representation theorem, $L$ is slowly varying if and only if there are measurable functions $c(s),\varepsilon(s)$ such that $\lim_{s\to\infty}c(s)=c_0>0$ and $\lim_{s\to\infty}\varepsilon(s)=0$ such that
\[
L(s)=c(s)\exp\left(\int_{s_0}^s\frac{\varepsilon(\tau)}{\tau}d\tau\right).
\]
\end{itemize}

\end{example}

\begin{remark}
Using the result of Theorem \ref{Thm:Rk}, we will be able to give examples of nonlocal diffusion equations having arbitrarily regular varying decay solutions.
\end{remark}

We conclude this section with the following sufficient condition on the kernel $J$ of (\ref{Eqn:main}) such that the decay of solutions is at the rate of a regular varying function. The result is true not only for radially symmetric nonlocal equations but also for non-symmetric ones. Examples of equations satisfying the hypothesis of this theorem will be presented in later section. For simplicity of the presentation, we consider $\chi_0=1$ in (\ref{Eqn:main}).

\begin{theorem}\label{Thm:GenDecay}

Let $\chi_0=1$ in (\ref{Eqn:main}) and let $R(t)=t^{\beta}L(t)$ be a regular varying function with index $\beta\in\R$. Assume there is a positive integer $N\in\BN$ such that either (i) $u_0\in L^1(\R^n)\cap L^\infty(\R^n)$ and
\begin{align}\label{Hyp:H2}\tag{H1}
\begin{cases}
\displaystyle
\sup_{x\in\R^n}\int_{\R^n}|J(x,y)|dy<\infty,
\\
\vspace{-7pt}\\
\displaystyle
|J_k(x,y)|\leq R_k\quad\forall\,x,y\in\R^n, k=N,N+1,\ldots,
\end{cases}
\end{align}
where $R_k:=R(k)$, or (ii) there is $1\leq p\leq\infty$ such that
\begin{align}\label{Hyp:H3}\tag{H2}
\begin{cases}
\displaystyle
\CJ_ku_0\in L^p(\R^n)&k=1,2,\ldots,N-1,
\\
\vspace{-7pt}\\
\displaystyle
\|\CJ^ku_0\|_{L^p}\leq R_k&k=N,N+1,\ldots
\end{cases}
\end{align}
Then the solution $u(t)$ of (\ref{Eqn:main}) satisfies
\begin{align*}
\|u(t)\|_{L^q}\lesssim t^{\beta}L(t)\quad\mbox{as $t\to\infty$},
\end{align*}
where $q=\infty$ in case (i) and $q=p$ in case (ii).

\end{theorem}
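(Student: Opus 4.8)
The plan is to start from the series representation of the solution in Definition \ref{Def:SolGreen} and reduce everything to Theorem \ref{Thm:Rk}. Writing $u(t)=e^{-t}\sum_{k=0}^\infty\frac{t^k}{k!}\CJ^ku_0$ and applying the triangle inequality in $L^q$, I obtain
\[
\|u(t)\|_{L^q}\leq e^{-t}\sum_{k=0}^\infty\frac{t^k}{k!}\|\CJ^ku_0\|_{L^q}.
\]
The whole argument then hinges on controlling the sequence $\|\CJ^ku_0\|_{L^q}$: showing each term is finite for the low orders $k<N$ and is bounded by (a constant times) $R_k$ for $k\geq N$, after which Theorem \ref{Thm:Rk} with $\alpha=1$ finishes the job.

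For case (ii) the required sequence bound is essentially built into hypothesis (H2). For $1\leq k\leq N-1$ the assumption $\CJ^ku_0\in L^p$ makes each norm finite, and for $k\geq N$ we have $\|\CJ^ku_0\|_{L^p}\leq R_k$ directly. For case (i) I first convert the pointwise kernel bound into a norm bound. Since $\sup_{x}\int_{\R^n}|J(x,y)|dy=:M<\infty$, the operator $\CJ$ is bounded on $L^\infty$ with operator norm at most $M$, so $\|\CJ^ku_0\|_{L^\infty}\leq M^k\|u_0\|_{L^\infty}$ is finite for every $k$, in particular for $0\leq k\leq N-1$. For $k\geq N$, the hypothesis $|J_k(x,y)|\leq R_k$ together with $u_0\in L^1$ yields
\[
|\CJ^ku_0(x)|\leq\int_{\R^n}|J_k(x,y)||u_0(y)|dy\leq R_k\|u_0\|_{L^1},
\]
hence $\|\CJ^ku_0\|_{L^\infty}\leq R_k\|u_0\|_{L^1}$.

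With these bounds in hand I split the series at $k=N$. The finite part $e^{-t}\sum_{k=0}^{N-1}\frac{t^k}{k!}\|\CJ^ku_0\|_{L^q}$ is a polynomial in $t$ multiplied by $e^{-t}$; since $L$ is slowly varying, $R(t)e^t$ grows faster than any power of $t$, so this part is $o(R(t))$ and in particular $\lesssim R(t)$. For the tail $k\geq N$, in both cases the estimate has the form $\|\CJ^ku_0\|_{L^q}\leq CR_k$, so
\[
e^{-t}\sum_{k=N}^\infty\frac{t^k}{k!}\|\CJ^ku_0\|_{L^q}\leq Ce^{-t}\sum_{k=N}^\infty\frac{t^k}{k!}R_k\lesssim Ce^{-t}R(t)e^{t}=CR(t),
\]
where the middle asymptotic is precisely Theorem \ref{Thm:Rk} with $\alpha=1$. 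Adding the two pieces gives $\|u(t)\|_{L^q}\lesssim R(t)=t^{\beta}L(t)$, with $q=\infty$ in case (i) and $q=p$ in case (ii).

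The argument is almost entirely bookkeeping once Theorem \ref{Thm:Rk} is available, so I do not expect a deep obstacle. The one point needing a little care is verifying that the low-order terms are genuinely negligible compared with $R(t)$; this rests on the standard fact that a slowly varying $L$ satisfies $t^{-\varepsilon}\lesssim L(t)$ for every $\varepsilon>0$, so $R(t)=t^{\beta}L(t)$ grows or decays at most like a power of $t$ and is therefore dominated from below by $e^{-t}$ times any polynomial. In case (i) the other mildly nontrivial step is the translation of the pointwise bound $|J_k|\leq R_k$ into the $L^\infty$ estimate on $\CJ^ku_0$, which is exactly where the hypothesis $u_0\in L^1$ enters.
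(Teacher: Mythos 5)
Your proposal is correct and follows essentially the same route as the paper: split the series at $k=N$, absorb the finitely many low-order terms using the operator/kernel bound $M^k$ (case (i)) or the assumed finiteness of $\|\CJ^ku_0\|_{L^p}$ (case (ii)), bound the tail by $R_k$ and invoke Theorem \ref{Thm:Rk} with $\alpha=1$. The paper's proof carries out exactly these steps, including your conversion of the pointwise bound $|J_k(x,y)|\leq R_k$ into $\|\CJ^ku_0\|_{L^\infty}\leq R_k\|u_0\|_{L^1}$ and the observation that $e^{-t}$ times a polynomial is negligible against $t^{\beta}L(t)$.
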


\begin{proof}
We split the solution into
\begin{align*}
u(t)&=\CG(t)u_0=e^{-t}\sum_{0\leq k<N}\frac{t^k}{k!}\CJ^ku_0+e^{-t}\sum_{k\geq N}\frac{t^k}{k!}\CJ^ku_0=:\CS_1+\CS_2.
\end{align*}
See Definition \ref{Def:SolGreen}.\medskip

Assume (i). By the first part of (\ref{Hyp:H2}) and the Fubini's theorem, we get that
\[
\int_{\R^n}|J_k(x,y)|dy\leq\int_{\R^n}\int_{(\R^n)^{k-1}}|J(x,y_1)J(y_1,y_2)\cdots J(y_{k-1},y)|dy_{k-1}\cdots dy_1dy\leq M^k
\]
where $M=\sup_{x}\int_{\R^n}|J(x,y)|dy<\infty$. The first term $\CS_1$ can be estimated by
\begin{align*}
|\CS_1|&\leq e^{-t}\sum_{0\leq k<N}\frac{t^k}{k!}\int_{\R^n}|J_k(x,y)u_0(y)|dy\\
&\leq e^{-t}\|u_0\|_{L^\infty}\left\{\max_{1\leq k\leq N-1}M^k\right\}\sum_{0\leq k<N}\frac{t^k}{k!}\\
&\leq C_{M,N}\|u_0\|_{L^\infty}e^{-t}\sum_{0\leq k<N}\frac{t^k}{k!}\\
&\leq C_{M,N}\|u_0\|_{L^\infty}t^{-|\beta|}L(t),
\end{align*}
as $t\to\infty$. For the second term $\CS_2$, we use the second part of the hypothesis (\ref{Hyp:H2}) and apply Theorem \ref{Thm:Rk} to get that
\begin{align*}
|\CS_2|&\leq e^{-t}\sum_{k\geq N}\frac{t^k}{k!}\int_{\R^n}|J_k(x,y)u_0(y)|dy\\
&\leq e^{-t}\|u_0\|_{L^1}\sum_{k\geq N}\frac{t^k}{k!}R_k\\
&\lesssim e^{-t}\|u_0\|_{L^1}R(t)e^{t}=\|u_0\|_{L^1}t^{\beta}L(t),
\end{align*}
as $t\to\infty$. Combining the inequalities of $\CS_1,\CS_2$, we get the desired estimate.\medskip

Now assume (ii). Again, we split $u(t)=\CS_1+\CS_2$. For $\CS_1$, we apply the triangle inequality to get
\begin{align*}
\|\CS_1\|_{L^p}&\leq e^{-t}\sum_{0\leq k<N}\frac{t^k}{k!}\|\CJ_ku_0\|_{L^p}\\
&\leq\left(\max_{1\leq k\leq N_1}\|\CJ_ku_0\|_{L^p}\right)e^{-t}\sum_{0\leq k<N}\frac{t^k}{k!}\\
&\leq Ct^\beta L(t)
\end{align*}
as $t\to\infty$. For the second term, we use
\begin{align*}
\|\CS_2\|_{L^p}&\leq e^{-t}\sum_{k\geq N}\frac{t^k}{k!}\|\CJ_ku_0\|_{L^p}\\
&\leq e^{-t}\sum_{k\geq N}\frac{t^k}{k!}R_k\\
&\leq Ce^{-t}R(t)e^t=Ct^\beta L(t)
\end{align*}
by Theorem \ref{Thm:Rk}. Combining the both estimates, we conclude the assertion for (ii).\QED

\end{proof}

\section{Integral operators with higher integrability}

In this section we apply results from the previous section to study the asymptotic behavior of solutions to the nonlocal equation (\ref{Eqn:main}) if the kernel $J$ is a radially symmetric $L^1$ function:
\begin{align}\label{Hyp:alpha2}\tag{H3}
J=J(x-y),\quad\int_{\R^n}|J(x)|dx=1.
\end{align}
Although the decay estimate derived in this section is now a classical result, our way of getting the estimate provides an alternative point of view. Additionally, the bound of kernels $J_k$ obtained (Proposition \ref{Prop:EstJk}) is new and could be useful in other discipline.\medskip

Before discussing our next main result, let us briefly recall the following basic fact.

\begin{lemma}[\cite{Feller71},\cite{Caravenna12}]

Let $f\in L^1(\R^n)$ and $f_k:=f\ast\cdots\ast f$ denote the $k$-fold convolution of $f$
\begin{itemize}
\item[(i)] $f_k\in L^\infty(\R^n)$ for some $k$ if and only if $\widehat{f}\in L^q(\R^n)$ for some $1\leq q<\infty$.
\item[(ii)] If $f_N\in L^\infty(\R^n)$ for some $N$ then $f_k\in L^\infty(\R^n)$ for all $k\geq N$.
\item[(iii)] If $f\in L^{1+\varepsilon_0}(\R^n)$ for some $\varepsilon_0>0$, then $f_k\in L^\infty(\R^n)$ for all $k$ large enough.
\end{itemize}

\end{lemma}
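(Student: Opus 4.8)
The plan is to pass to the Fourier side throughout, exploiting the fundamental relation $\widehat{f_k}=(\widehat{f})^k$ together with the fact that $\widehat{f}\in L^\I(\R^n)$ (indeed $\widehat{f}\in C_0$ by Riemann--Lebesgue, since $f\in L^1$). For part (i) I would establish both implications through this relation. For the forward direction, suppose $f_k\in L^\I(\R^n)$ for some $k$. Since $f\in L^1$, repeated application of Young's inequality gives $f_k\in L^1(\R^n)$, so $f_k\in L^1\cap L^\I\subset L^2(\R^n)$; Plancherel then yields $(\widehat{f})^k=\widehat{f_k}\in L^2(\R^n)$, i.e.\ $\widehat{f}\in L^{2k}(\R^n)$, and one takes $q=2k<\I$. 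For the reverse direction, suppose $\widehat{f}\in L^q(\R^n)$ for some $1\leq q<\I$. Because $\widehat{f}\in L^q\cap L^\I$, a trivial interpolation ($\int|\widehat{f}|^s\leq\|\widehat{f}\|_{L^\I}^{s-q}\int|\widehat{f}|^q$) gives $\widehat{f}\in L^s(\R^n)$ for every $s\geq q$; choosing an integer $k\geq q$ makes $(\widehat{f})^k\in L^1(\R^n)$, and the Fourier inversion theorem then identifies $f_k$ (a.e.) with the inverse transform of an $L^1$ function, which is bounded (in fact continuous and vanishing at infinity), so $f_k\in L^\I(\R^n)$.

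For part (ii) the plan is a single application of Young's convolution inequality. For $k\geq N$ I would write $f_k=f_{k-N}\ast f_N$, where $f_{k-N}\in L^1(\R^n)$ (with $\|f_{k-N}\|_{L^1}\leq\|f\|_{L^1}^{k-N}$, as in Lemma~\ref{Lem:JkL1}) and $f_N\in L^\I(\R^n)$ by hypothesis. Young's inequality with exponents $(1,\I,\I)$ then gives $\|f_k\|_{L^\I}\leq\|f_{k-N}\|_{L^1}\|f_N\|_{L^\I}<\I$, so $f_k\in L^\I(\R^n)$ for every $k\geq N$.

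For part (iii) I would combine the Hausdorff--Young inequality with parts (i) and (ii). Shrinking $\varepsilon_0$ if necessary we may assume $1<1+\varepsilon_0\leq2$, so that $f\in L^{1+\varepsilon_0}(\R^n)$ with the exponent in the Hausdorff--Young range; this produces $\widehat{f}\in L^q(\R^n)$ with the conjugate exponent $q=(1+\varepsilon_0)/\varepsilon_0<\I$. Part (i) then furnishes some $k_0$ with $f_{k_0}\in L^\I(\R^n)$, and part (ii) upgrades this to $f_k\in L^\I(\R^n)$ for all $k\geq k_0$, which is the claim. The only delicate point in the whole argument is the reverse implication of part (i): one must be careful that $(\widehat{f})^k\in L^1$ genuinely licenses the pointwise inversion formula, and hence an honest bounded representative of $f_k$, rather than merely an $L^2$ identity. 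This is precisely where the hypothesis $q<\I$ (as opposed to only the automatic $\widehat{f}\in L^\I$) is essential, since it is exactly what makes a sufficiently high power of $\widehat{f}$ integrable.
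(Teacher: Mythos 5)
Your proof is correct, but it is genuinely different from what the paper does: the paper does not prove (i) and (iii) at all, it simply declares (ii) obvious and cites \cite{Caravenna12} and \cite{Feller71} for the other two parts, remarking that it will instead establish a \emph{stronger, quantitative} version of (iii) later, namely Proposition~\ref{Prop:EstJk}, where the Brascamp--Lieb (sharp Young) inequality of Lemma~\ref{Lem:SharpYoung} yields the explicit bound $\|f_k\|_{L^\infty}\lesssim k^{-n/2}$. Your argument, by contrast, is a self-contained qualitative proof on the Fourier side: for (i) the forward direction via $f_k\in L^1\cap L^\infty\subset L^2$, Plancherel, and $\widehat{f_k}=(\widehat f)^k$, and the reverse via $\widehat f\in L^q\cap L^\infty$, integrability of a high integer power, and the $L^1$ inversion theorem; for (ii) the decomposition $f_k=f_{k-N}\ast f_N$ with Young's inequality (the same observation the paper dismisses as obvious); for (iii) Hausdorff--Young feeding into (i) and (ii). All steps are sound, including the delicate point you flag, that $(\widehat f)^k\in L^1$ together with $f_k\in L^1$ licenses the a.e.\ pointwise inversion and hence a bounded representative. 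What each approach buys: yours makes the lemma independent of outside references and works at the natural level of generality (no positivity or symmetry beyond $f\in L^1$), but it is purely qualitative --- it gives membership $f_k\in L^\infty$ with no control on $\|f_k\|_{L^\infty}$ as $k\to\infty$; the paper's route through the sharp constant in Lemma~\ref{Lem:SharpYoung} is what produces the decay rate $k^{-n/2}$, and it is that rate, not mere boundedness, which drives the asymptotic results (Theorem~\ref{Thm:GreenJsigma} and its consequences) in the rest of the paper.
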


\begin{proof}
(ii) is obvious. The proof of (i) and (iii) can be found in \cite{Caravenna12}, \citep{Feller71}. We will present a more precise assertion (Proposition \ref{Prop:EstJk}) than (iii) which also provides the bound of the sup-norm $\|f_k\|_{L^\infty}\sim k^{-n/2}$. \QED
\end{proof}

Now we present our next main result. The proof uses the Brascamp-Lieb inequality (or sharp Young's convolution inequality), Lemma \ref{Lem:SharpYoung}. We note that $J$ can be real, or complex valued.

\begin{proposition}\label{Prop:EstJk}
Assume (\ref{Hyp:alpha2}) and furthermore
\begin{align}\label{Hyp:J1epsilon}
J\in L^1(\R^n)\cap L^{1+\V_0}(\R^n)\quad\mbox{for some $0<\V_0\leq\infty$}.
\end{align}
Let $N=\lceil\frac{1}{\V_0}\rceil+1$. If $k\geq N$ then $J_k\in L^\I(\R^n)\cap C(\R^n)$ and there are constants $C_n,\gamma>0$ such that
\begin{align}\label{IntegralCond}
\|J_k\|_{L^\I}\leq C_{n}\exp\left(\gamma\int_{\R^n}|J(x)|\ln|J(x)|dx\right)k^{-n/2}\quad\forall\,k\geq N.
\end{align}

\end{proposition}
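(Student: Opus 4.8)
The plan is to use the Brascamp--Lieb inequality (Lemma \ref{Lem:SharpYoung}) with the convolution $J_k = J \ast \cdots \ast J$ ($k$ copies of $J$), choosing the exponents $p_1,\dots,p_k$ cleverly so that the product of sharp constants $\prod_{l=1}^k C_{p_l}$ controls the $k^{-n/2}$ decay. First I would set $r = \infty$ in the Brascamp--Lieb constraint, which forces $\frac{1}{p_1}+\cdots+\frac{1}{p_k} = k-1$, equivalently $\sum_{l=1}^k\bigl(1-\frac{1}{p_l}\bigr)=1$, i.e.\ $\sum_l \frac{1}{q_l} = 1$ where $q_l$ is the conjugate exponent of $p_l$. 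The idea is to take most of the $p_l$ equal to $1$ (contributing $C_1=1$ and using $\|J\|_{L^1}=1$ from (\ref{Hyp:alpha2})) and to distribute the remaining budget among a growing number of factors with $p_l$ close to $1$ from above. Concretely, I would take $p_l = q$ (with conjugate $q'$) for $j$ of the factors and $p_l=1$ for the remaining $k-j$ factors, subject to $\frac{j}{q'}=1$, so that $j = q'$; as $k$ grows we let $q \downarrow 1$ (so $q' \to \infty$) to involve more and more $L^q$-factors. The condition $J\in L^{1+\varepsilon_0}$ with $N=\lceil 1/\varepsilon_0\rceil+1$ guarantees the needed $L^q$-membership once $k\ge N$.

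The heart of the estimate is the asymptotics of the sharp constant $C_p=\bigl(p^{1/p}/q^{1/q}\bigr)^{1/2}$ as $p\to\infty$ (equivalently the conjugate setup as $q\to 1$). I would expand $\ln C_{p}$ for the relevant factor: writing things in terms of $s=1/p'$ (the small parameter), one finds $C_{p'}$ behaves like $e^{-\frac{1}{2}s\ln(1/s)+O(s)}$, so that raising to a power and taking the product of $j\sim s^{-1}$ such factors produces a term scaling like $s^{-1}\cdot s\ln(1/s)$, i.e.\ a logarithmic gain that, after optimizing the split, yields exactly the $k^{-n/2}$ rate. The entropy-type quantity $\exp\bigl(\gamma\int |J|\ln|J|\,dx\bigr)$ arises because, to control $\|J\|_{L^q}$ uniformly as $q\downarrow 1$, one uses $\|J\|_{L^q}^q = \int |J|^q = \int |J|\,|J|^{q-1} = \int |J|\, e^{(q-1)\ln|J|}\,dx \approx \|J\|_{L^1} + (q-1)\int |J|\ln|J|\,dx$, so the logarithmic integral of $|J|$ enters as the first-order correction; the constant $\gamma$ absorbs the dimensional factor $n$ from the $(\,\cdot\,)^n$ in Lemma \ref{Lem:SharpYoung} and from the optimization.

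I expect the main obstacle to be the simultaneous optimization: one must choose, for each $k$, the number $j=j(k)$ of non-trivial factors and the exponent $q=q(k)$ so that (a) the Brascamp--Lieb constraint $\sum 1/q_l=1$ holds exactly, (b) the product $\prod_l C_{p_l}$ of sharp constants is as small as possible, and (c) the resulting bound $\|J_k\|_{L^\infty} \le \bigl(\prod_l C_{p_l}\bigr)^n \prod_l \|J\|_{L^{p_l}}$ genuinely produces $k^{-n/2}$ rather than a worse power. Balancing the logarithmic loss in $\|J\|_{L^q}$ (which grows like $e^{(q-1)\int|J|\ln|J|}$) against the logarithmic gain from the sharp constants $C_p$ requires choosing $q-1 \sim c/k$ (so $j=q' \sim k/c$), and then showing the two logarithmic contributions combine to the advertised exponential prefactor while the polynomial part is precisely $k^{-n/2}$. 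A secondary, easier point is the continuity and finiteness claim: once $k\ge N$ we have $J_k \in L^\infty$, and since $J_k = J_{k-1}\ast J$ with $J_{k-1}\in L^{\infty}$ and $J\in L^1$, a standard dominated-convergence/approximation argument gives $J_k\in C(\R^n)$, which I would dispatch briefly at the end.
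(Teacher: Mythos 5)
Your proposal is correct and is essentially the paper's own argument: the paper applies the Brascamp--Lieb inequality with $r=\infty$ and the symmetric exponents $p_1=\cdots=p_k=\frac{k}{k-1}$, which is exactly your scheme in the case $j=q'=k$ --- a case your own constraint $j=q'$ together with the requirement $j\asymp k$ essentially forces --- and no delicate balancing is actually needed, since $(q-1)(q'-1)=1$ makes the norm factor $\|J\|_{L^q}^{q'}=\bigl(\int_{\R^n}|J|^{k/(k-1)}dx\bigr)^{k-1}$ converge automatically to the entropy term $\exp\bigl(\int_{\R^n}|J|\ln|J|\,dx\bigr)$ (which the paper proves rigorously via L'H\^opital's rule and dominated convergence), while the product of sharp constants gives $\bigl(\prod_{l}C_{p_l}\bigr)^n\leq e^{n/2}k^{-n/2}$, matching your expansion $C_q^{q'}\lesssim (q')^{-1/2}$. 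The one genuine slip is your continuity argument at $k=N$: there $J_{N-1}$ need not lie in $L^\infty$ (cf.\ the paper's remark on Bessel kernels, where $B_k\notin L^\infty$ for $k<n/2$), so instead write $J_N=J_{N-1}\ast J$ and note that $J_{N-1}\in L^{N}(\R^n)$ (by Young's inequality applied to $N-1$ copies of $J\in L^{N/(N-1)}$) and $J\in L^{N/(N-1)}(\R^n)$ lie in dual Lebesgue spaces, whence the convolution is continuous by continuity of translation.
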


\begin{proof}
Without loss of generality, we can assume $J\geq0$. Clearly, $J\in L^p(\R^n)$ for all $1\leq p\leq1+\V_0$ by interpolation. We apply the Brascamp-Lieb (or sharp Young) inequalities (see Lemma \ref{Lem:SharpYoung}). Take $k\geq N=\lceil\frac{1}{\V_0}\rceil+1$, and put
\[
p_1=\cdots=p_k=\frac{k}{k-1},\quad r=\I,\quad f_1=\cdots=f_k=J
\]
in the Brascamp-Lieb inequality. Note that $f_l=J\in L^{p_l}(\R^n)$ for all $l$. For each $p_l$, the H\"older conjugate is $q_l=k$. We calculate
\begin{align*}
C_{p_l}^k&=\left(\frac{(k/(k-1))^{(k-1)/k}}{k^{1/k}}\right)^{k/2}
\\
&=\frac{1}{k^{1/2}}\left(1+\frac{1}{k-1}\right)^{(k-1)/2}\\
&\leq\frac{\sqrt{e}}{k^{1/2}}.
\end{align*}
Now we have $J_k(x)=f_1\ast\cdots\ast f_k$. So we obtain by Lemma \ref{Lem:SharpYoung} and the above calculations that
\begin{align*}
\sup_{x\in\R^n}|J_k(x)|&\leq\frac{e^{n/2}}{k^{n/2}}\left(\int_{\R^n}J(x)^{k/(k-1)}dx\right)^{k-1}.
\end{align*}
It is obvious that $J_k$ are continuous. Thus $J_k\in BC(\R^n)$. \medskip

Consider the preceding integral as $k\to\I$. We apply the L'Hopital's rule and the dominated convergence theorem to get
\begin{align*}
\lim_{k\to\I}(k-1)\ln\int_{\R^n} J(x)^{k/(k-1)}dx&=\lim_{\L\to0^+}\frac{1}{\L}\ln\int_{\R^n} J(x)^{\L+1}dx\quad(\L:=\frac{1}{k-1}),\\
&=\int_{\R^n}J(x)\ln J(x)dx<\I.
\end{align*}
So there is a constant $\gamma>0$ such that the estimate
\[
\sup_x|J_k(x)|\leq C_n\exp\left(\gamma\int J(x)\ln J(x)dx\right)k^{-n/2}
\]
is true for all $k\geq N$.\QED
\end{proof}

\begin{remark}
\begin{itemize}
\item[(1)] In the preceding proposition, the kernels $J_k$ needs not be bounded when $k$ is small. For instance, the Bessel potential operator 
\[
\CB=(1-\Lap)^{-1}\quad\mbox{on $\R^n$ ($n>2$)}
\]
is known to have the kernel $B\in L^1(\R^n)\cap L^{1+\V_0}(\R^n)$ for any $\V_0<\frac{2}{n-2}$ and the $k$-fold iterated kernel
\[
B_k\not\in L^\I(\R^n)\quad\mbox{for $k<\frac{n}{2}$},\quad B_k\in L^\I(\R^n)\quad\mbox{for all $k\geq\frac{n}{2}$}.
\]

More generally, if $\CK$ is a weakly singular integral operator, i.e.\ its kernel $K$ satisfies
\[
|K(x)|\sim\frac{1}{|x|^{n-\alpha}}\quad\mbox{as $|x|\to0$}\quad(0<\alpha<n),
\]
$K$ is finite outside the diagonal, and $K$ decays sufficiently fast at infinity, then
\[
K_l\not\in L^\I(\R^n)\quad\mbox{for $l<\frac{n}{\alpha}$},\quad K_l\in L^\I(\R^n)\quad\mbox{for $l\geq\frac{n}{\alpha}$}.
\]
The Bessel potential $\CB$ is a weakly singular operator having $\alpha=2$ and exponential decay at infinity.
\item[(2)] A more precise result compared to Proposition \ref{Prop:EstJk} was derived in \citep{KonMolVain17} (Lemma 5.4), using the \textit{local limit theorem}, but for a rather restricted class of kernel functions (see Eq.\ (32) and (33) in \citep{KonMolVain17}). More precisely, in order to apply the local limit theorem, it was assumed in \citep{KonMolVain17} that the kernel $J$ has \textit{ultra light tail}, i.e.\
\[
|J(x)|,\,\,|\N J(x)|\lesssim e^{-|x|^\alpha}\quad\mbox{where $\alpha>1$}.
\]
It should be observed that the estimate in Proposition \ref{Prop:EstJk} is uniform, whereas, Lemma 5.4 in \citep{KonMolVain17} is true only when $|x|\leq k$. More importantly, the estimate (48) derived in \citep{KonMolVain17} seems weaker in some cases than what we have shown here.
\item[(3)] It should be noted that there are kernel functions
which do not satisfy the criterion of Proposition \ref{Prop:EstJk}, that is there are $J\in L^1(\R^n)$ such that 
\[
\|J\|_{L^{1+\V}}=\I\quad\mbox{for all $\V>0$}.
\]
A basic example is 
\[
J(x)=\frac{1}{|x|^n\{1+(\ln|x|)^2\}}
\]
for which $\|J\|_{L^1}<\I$ but $\|J\|_{L^{1+\V}}=\I$ for all $\V>0$.
\end{itemize}

\end{remark}

Using Theorem \ref{Thm:GenDecay} and Proposition \ref{Prop:EstJk}, we obtain the following decay property of solutions to (\ref{Eqn:main}).

\begin{theorem}\label{Thm:GreenJsigma}

Assume $J$ satisfies (\ref{Hyp:alpha2}) and (\ref{Hyp:J1epsilon}). If $u_0\in L^1(\R^n)\cap L^\I(\R^n)$, then the solution $u(t)$ of (\ref{Eqn:main}) satisfies
\begin{align}\label{Est:GJsigma1}
\|u(t)\|_{L^\I}\leq Ct^{-n/2}\quad\forall\,t\geq t_0.
\end{align}
Moreover,  for each $1\leq q\leq\I$, there is a constant $C>0$ independent of $q$ such that
\begin{align}\label{Est:GJsigma3}
\|u(t)\|_{L^q}\leq Ct^{-\frac{n}{2}(1-\frac{1}{q})}\quad\forall\,t\geq t_0.
\end{align}

\end{theorem}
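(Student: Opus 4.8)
**

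The plan is to combine Proposition~\ref{Prop:EstJk} with Theorem~\ref{Thm:GenDecay} to obtain the $L^\infty$ decay, and then interpolate against an $L^1$ bound to cover all intermediate exponents. First I would verify that the hypotheses of Theorem~\ref{Thm:GenDecay} case (i) hold with the regular varying function $R(t)=t^{-n/2}$ (so $\beta=-n/2$ and $L\equiv\text{const}$). The first condition in (\ref{Hyp:H2}) is immediate from (\ref{Hyp:alpha2}): since $J=J(x-y)$ with $\|J\|_{L^1}=1$, we have $\sup_x\int|J(x,y)|dy=\|J\|_{L^1}=1<\infty$. The second condition in (\ref{Hyp:H2}) is exactly the content of Proposition~\ref{Prop:EstJk}: with $N=\lceil 1/\varepsilon_0\rceil+1$, for all $k\geq N$ the kernel satisfies $|J_k(x,y)|=|J_k(x-y)|\leq C_n\exp(\gamma\int|J|\ln|J|)\,k^{-n/2}=:R_k$, which is a uniform pointwise bound. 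Since $u_0\in L^1\cap L^\infty$, Theorem~\ref{Thm:GenDecay}(i) applies directly and yields $\|u(t)\|_{L^\infty}\lesssim t^{-n/2}$ as $t\to\infty$, which (after absorbing the behavior on a compact time interval into the constant) gives (\ref{Est:GJsigma1}).

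For the $L^q$ estimate (\ref{Est:GJsigma3}), the plan is to interpolate between the just-established $L^\infty$ bound and an $L^1$ bound. The $L^1$ bound should come from Lemma~\ref{Lem:JkL1}: since $\chi_0=1=\|J\|_{L^1}$ under (\ref{Hyp:alpha2}), that lemma gives $\|\CG(t)u_0\|_{L^1}\leq e^{-(\chi_0-\|J\|_{L^1})t}\|u_0\|_{L^1}=\|u_0\|_{L^1}$, so $\|u(t)\|_{L^1}\leq C$ uniformly in $t$. Then the standard $L^p$-interpolation inequality
\begin{align*}
\|u(t)\|_{L^q}\leq\|u(t)\|_{L^1}^{1/q}\,\|u(t)\|_{L^\infty}^{1-1/q}
\end{align*}
combines these two bounds into
\begin{align*}
\|u(t)\|_{L^q}\leq C^{1/q}\,(Ct^{-n/2})^{1-1/q}\leq Ct^{-\frac{n}{2}(1-\frac1q)},
\end{align*}
where the final constant can be taken independent of $q$ because $C^{1/q}$ and $C^{1-1/q}$ are bounded by $\max\{1,C\}$ uniformly in $q\in[1,\infty]$. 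This is precisely (\ref{Est:GJsigma3}).

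I do not expect any serious obstacle here, since both inputs are already in hand; the estimate is essentially a bookkeeping assembly of Proposition~\ref{Prop:EstJk}, Theorem~\ref{Thm:GenDecay}, and Lemma~\ref{Lem:JkL1}. The one point requiring a little care is the claim that the interpolation constant in (\ref{Est:GJsigma3}) is \emph{independent of} $q$. The subtlety is that the naive constant from Riesz--Thoren or from the elementary $L^p$-interpolation inequality is a product of powers $C^{1/q}C^{1-1/q}$ of the endpoint constants, and one must observe that each such factor stays bounded as $q$ ranges over $[1,\infty]$; this follows because $x\mapsto C^x$ is continuous and hence bounded on $[0,1]$, so $C^{1/q},C^{1-1/q}\leq\max\{1,C\}$. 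The remaining care is simply to fold the small-time behavior of $u$ into the constant so that the estimates hold for all $t\geq t_0$ rather than merely asymptotically.
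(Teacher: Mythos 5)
Your proposal is correct and follows essentially the same route as the paper: verify hypothesis (\ref{Hyp:H2}) via Proposition~\ref{Prop:EstJk} (the uniform bound $\|J_k\|_{L^\infty}\leq Ck^{-n/2}$ for $k\geq\lceil 1/\varepsilon_0\rceil+1$), invoke Theorem~\ref{Thm:GenDecay}(i) for the $L^\infty$ decay, and then interpolate $\|u(t)\|_{L^q}\leq\|u(t)\|_{L^\infty}^{1-1/q}\|u(t)\|_{L^1}^{1/q}$ against the uniform-in-time $L^1$ bound from Lemma~\ref{Lem:JkL1}. Your additional remarks on the $q$-uniformity of the constant and on absorbing small times into the constant are minor bookkeeping points the paper leaves implicit, and they are handled correctly.
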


\begin{proof}
By Proposition \ref{Prop:EstJk}, we have
\[
\|J_k\|_{L^\infty}\leq R_k:=Ck^{-n/2}\quad C=C(n,J)>0
\]
for all $k\geq N=\lceil 1/\varepsilon_0\rceil+1$. It is now clear that $J$ satisfies (\ref{Hyp:H2}) in Theorem \ref{Thm:GenDecay}. Thus we obtain
\[
\|u(t)\|_{L^\infty}\leq Ct^{-n/2}\quad\forall\,t\geq t_0>0.
\]
For (\ref{Est:GJsigma3}), we simply apply the interpolation
\[
\|u(t)\|_{L^q}\leq\|u(t)\|_{L^\I}^{1-\frac{1}{q}}\|u(t)\|_{L^1}^{\frac{1}{q}}
\]
and Lemma \ref{Lem:JkL1}.\QED

\end{proof}

If $J\in L^1(\R^n)\cap L^\infty(\R^n)$, e.g.\ $J$ is continuous with compact support, then the result in the preceding theorem can be strengthen. In this case, we have for any $u_0\in L^1(\R^n)$ (not necessarily in $L^\infty(\R^n)$) then the solution of (\ref{Eqn:main}) satisfies
\[
\|u(t)-e^{-t}u_0\|_{L^\infty}\lesssim t^{-n/2}\quad\mbox{as $t\to\infty$}.
\]
This refined asymptotic behavior was obtained in \cite{IgnatEtal08}.\medskip

Moreover, we have the following refined asymptotic behavior.

\begin{corollary}

Assume $J$ satisfies (\ref{Hyp:alpha2}) and (\ref{Hyp:J1epsilon}). Then for any $u_0\in L^1(\R^n)$, the solution of (\ref{Eqn:main}) satisfies
\[
\left\|u(t)-e^{-t}\sum_{k=0}^{N-1}\frac{t^k}{k!}\CJ^ku_0\right\|_{L^\infty}\lesssim t^{-n/2}\quad\mbox{as $t\to\infty$},
\]
where $N=\lceil\frac{1}{\varepsilon_0}\rceil+1$.

\end{corollary}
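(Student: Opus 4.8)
The plan is to recognize the bracketed difference as exactly the tail of the Green-operator series and to estimate that tail term by term. By Definition \ref{Def:SolGreen},
\[
u(t)-e^{-t}\sum_{k=0}^{N-1}\frac{t^k}{k!}\CJ^ku_0=e^{-t}\sum_{k=N}^\infty\frac{t^k}{k!}\CJ^ku_0,
\]
so it suffices to bound the $L^\infty$ norm of the right-hand side. The reason the first $N$ terms must be subtracted is that $J_k$ need not lie in $L^\infty(\R^n)$ for $k<N$ (as the Remark after Proposition \ref{Prop:EstJk} illustrates for the Bessel and weakly singular kernels), so those terms cannot be controlled in $L^\infty$ by a datum that is merely $L^1$; only the tail $k\geq N$ is amenable to the higher-integrability estimate, which is precisely what lets us drop the hypothesis $u_0\in L^\infty$ of Theorem \ref{Thm:GreenJsigma}.

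First I would pass to the convolution representation. Since $J$ is radially symmetric, each $\CJ^k$ acts as convolution against $J_k$, i.e.\ $\CJ^ku_0=J_k\ast u_0$. For $k\geq N$, Proposition \ref{Prop:EstJk} gives $J_k\in L^\infty(\R^n)$ with $\|J_k\|_{L^\infty}\leq Ck^{-n/2}$, so Young's inequality in the form $\|f\ast g\|_{L^\infty}\leq\|f\|_{L^\infty}\|g\|_{L^1}$ yields
\[
\|\CJ^ku_0\|_{L^\infty}=\|J_k\ast u_0\|_{L^\infty}\leq\|J_k\|_{L^\infty}\|u_0\|_{L^1}\leq C\|u_0\|_{L^1}k^{-n/2}\qquad(k\geq N).
\]
This is exactly hypothesis (\ref{Hyp:H3}) of Theorem \ref{Thm:GenDecay} with $p=\infty$, $R_k=C\|u_0\|_{L^1}k^{-n/2}$, and regular varying index $\beta=-n/2$, $L\equiv1$.

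Next I would sum. Applying the triangle inequality in $L^\infty$ together with the previous bound gives
\[
\left\|e^{-t}\sum_{k=N}^\infty\frac{t^k}{k!}\CJ^ku_0\right\|_{L^\infty}\leq C\|u_0\|_{L^1}\,e^{-t}\sum_{k=N}^\infty\frac{t^k}{k!}k^{-n/2}.
\]
By Theorem \ref{Thm:Kummer} with $\alpha=1$ and $\beta=-n/2$ the series obeys $\sum_{k=N}^\infty\frac{t^k}{k!}k^{-n/2}\asymp t^{-n/2}e^t$ as $t\to\infty$, so the right-hand side is $\lesssim t^{-n/2}$, which is the claim; equivalently, one may simply invoke case (ii) of Theorem \ref{Thm:GenDecay} applied to the tail. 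There is no serious obstacle: the result follows directly from Proposition \ref{Prop:EstJk} and Theorem \ref{Thm:Kummer}, combined through Young's inequality. The only point requiring care is the justification for discarding the low-order terms $k<N$, namely the observation that $\CJ^ku_0$ may fail to be bounded for such $k$ when $u_0$ is merely in $L^1$; this is exactly why the finite corrective sum appears in the statement.
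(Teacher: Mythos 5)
Your proof is correct and follows essentially the same route the paper intends: the corollary is exactly the tail estimate from the proof of Theorem \ref{Thm:GreenJsigma}, namely $\|J_k\|_{L^\infty}\lesssim k^{-n/2}$ from Proposition \ref{Prop:EstJk}, Young's inequality against $\|u_0\|_{L^1}$, and the series asymptotics $e^{-t}\sum_{k\geq N}\frac{t^k}{k!}k^{-n/2}\asymp t^{-n/2}$ from Theorem \ref{Thm:Kummer}. The only loose point is your parenthetical remark that one may ``invoke case (ii) of Theorem \ref{Thm:GenDecay}'' --- that theorem as stated requires $\CJ^ku_0\in L^p$ for the low-order terms $k<N$ too, which may fail here, so it is your explicit summation of the tail (not a verbatim citation of that theorem) that carries the argument; this is a presentational quibble, not a gap.
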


\begin{remark}\label{Rem:Stable}

In \citep{ChasseigneEtal06}, the nonlocal problem (\ref{Eqn:main}) was investigated with $\chi_0=\|J\|_{L^1}=1$. Assuming the kernel $J$ has the Fourier transform expansion
\begin{align}\label{Fourier:J1}
\widehat{J}(\xi)=1-A|\xi|^\S+o(|\xi|^\S)
\quad\mbox{as $\xi\to0$},
\end{align}
and the initial function $u_0,\widehat{u}_0\in L^1(\R^n)$, the authors were able to prove the decay estimate
\[
\|\CG(t)u_0\|_{L^\I}\leq Ct^{-n/\S}.
\]
This kind of kernel functions arises in the context of \textit{stable laws} with index $\sigma$ and it was remarked in \citep{Feller71} (the remark after Theorem 2, XV.\ 5) that if $J$ is a stable law with index $0<\sigma<2$, then necessarily 
\[
\|J_k\|_{L^\I}=\I\quad\mbox{for all $k$},
\]
since otherwise, the pointwise bound should be $t^{-n/2}$ (the normal distributions). We will address the issue that $\|J_k\|_{L^\I}=\I$ for all $k$ in Section \ref{Subsec:PtStable}. \medskip

As is noted in \cite{Alfaro17}, the expansion (\ref{Fourier:J1}) holds true for $J\geq0$, bounded, radially function $\|J\|_{L^1}$ where the algebraic tail is
\begin{align*}
J(x)\sim\frac{1}{|x|^{n+2+\varepsilon}}\quad\varepsilon>0\,\,(\sigma=2),\quad J(x)\sim\frac{1}{|x|^\alpha}\quad n<\alpha<n+2\,\,(\sigma=\alpha-n\in(0,2)),
\end{align*}
as $|x|\to\infty$. For the second type of tails, the second momentum is $\infty$. At the critical algebraic tail
\[
J(x)\sim\frac{1}{|x|^{n+2}}\quad\mbox{as $|x|\to\infty$},
\]
it follows that the Fourier expansion of $J$ behaves as
\[
\widehat{J}(\xi)=1-A|\xi|^2\ln(1/|\xi|)+o(|\xi|^2\ln(1/|x|))\quad\mbox{as $|\xi|\to0$}.
\]

\end{remark}

\begin{remark}

It is not difficult, by chasing through the proofs of results in this section that, if 
\[
\chi_0=\|J\|_{L^1},
\]
then the same decay of solution is still true under the hypotheses (\ref{Hyp:alpha2}) and (\ref{Hyp:J1epsilon}). On the other hand, if $\chi_0>\|J\|_{L^1}$ the solution shows exponential decay instead.
This partially answers an open question raised in \citep{ChasseigneEtal06} about the asymptotic behavior of solutions to nonlocal diffusion equations when $\chi_0=1,\|J\|_{L^1}\neq1$.
\end{remark}

\begin{remark}

In \citep{ChasseigneEtal14}, a fractional decay estimate was proved similar to our result. Under the assumption that $J\in C(\R^n)$ and $J\gtrsim|x-y|^{-(n+2\S)}$ for $|x-y|$ large, they obtained the asymptotic behavior
\[
\|u(t)\|_{L^q}\leq C_qt^{-\frac{n}{2\S}(1-\frac{1}{q})}
\]
as $t\to\I$. This estimate is true for $0<\S<1$ and $1\leq q<\I$. More importantly, the constant $C_q$ depends on $q$. See also \citep{IgnatRossi09}.

\end{remark}

\section{Integral operators of stable laws}\label{Subsec:PtStable}

In this section, we study the nonlocal equation (\ref{Eqn:main}) when the kernel $J\geq0$ is radially symmetric and represents a stable law with index $0<\S<2$. For simplicity, we assume $\chi_0=\|J\|_{L^1}=1$. The decay estimates of solutions obtained in \cite{ChasseigneEtal06} will be reproved and generalized from the power series point of view (see Corollary \ref{Cor:DecaySigma}, and Theorem \ref{Thm:DecayLn} for the logarithmic perturbation case). The results will be further generalized to discover the decay of solutions of (\ref{Eqn:main}) satisfying a regular varying function in the next section.\medskip

As in Remark \ref{Rem:Stable}, for stable laws, $\|J_k\|_{L^\I}=\I$ for all $k$, so Proposition \ref{Prop:EstJk} is useless. To deal with this situation, it is necessarily to impose a certain integrability property for the initial condition $u_0$
so that $\CJ_ku_0\in L^p(\R^n)$ ($1\leq p\leq\I$) for all $k$. This is essentially the key idea in deriving the decay estimate (but with $p=\infty$) in \cite{ChasseigneEtal06} via the Fourier splitting technique. 
We show that $\|\CJ_ku_0\|_{L^p}$ is regular varying (w.r.t.\ $k$) and the decay estimate then follows directly from Theorem \ref{Thm:Rk}.
\medskip

Let us state the assumption for the following theorem:
\begin{align}\tag{H4}\label{Fourier:Jsigma}
\begin{cases}
\displaystyle
J=J(|x|)\geq0,\quad\chi_0=\|J\|_{L^1}=1,\\
\vspace{-10pt}\\
\displaystyle
\widehat{J}(\xi)=1-A|\xi|^\sigma+o(|\xi|^\sigma)\,\,\mbox{as $\xi\to0$},\\
\vspace{-10pt}\\
\displaystyle
\hspace{2cm}\mbox{where $0<\sigma\leq2,A>0$,}\\
\vspace{-10pt}\\
\displaystyle
u_0\in L^1(\R^n),\,\,\widehat{u}_0\in L^1(\R^n).
\end{cases}
\end{align}
Note that, the assumption $\widehat{u}_0\in L^1(\R^n)$ implies $u_0\in L^\infty(\R^n)$ by the Fourier inversion formula.
Such a kernel $J$ is also known as dispersal kernel \cite{Alfaro17}.

\begin{theorem}\label{Thm:StableJk}

Assume (\ref{Fourier:Jsigma}). Let $1\leq p\leq\infty$. Then, 
\[
\CJ_ku_0\in L^{p}(\R^n)\quad\mbox{for all $k$},
\]
and, furthermore, there is a positive integer $N=N(n,J)$ such that
\begin{align}\label{Est1:ThmStableJk}
\|\CJ_ku_0\|_{L^{p}}\leq C(\|u_0\|_{L^1}+\|\widehat{u}_0\|_{L^1})k^{-\frac{n}{\sigma}(1-\frac{1}{p})}\quad\forall\,k\geq N,
\end{align}
where $C>0$ is a constant depending only on $n,J$.

\end{theorem}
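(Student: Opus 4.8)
The plan is to reduce the $L^p$ estimate to the two endpoints $p=1$ and $p=\infty$ and then interpolate. On the Fourier side the radial symmetry gives $\widehat{\CJ^k u_0}(\xi)=\widehat{J}(\xi)^k\,\widehat{u}_0(\xi)$, since $\CJ^k u_0=J_k\ast u_0$ and $\widehat{J}_k=\widehat{J}^{\,k}$. Because $J\geq0$ with $\|J\|_{L^1}=1$ we have $\widehat{J}(0)=1$ and $|\widehat{J}(\xi)|\leq1$ everywhere, and since $u_0\in L^1\cap L^\infty$ (the latter from $\widehat{u}_0\in L^1$ and Fourier inversion) while $J_k\in L^1$, each $\CJ^k u_0=J_k\ast u_0$ already lies in $L^1\cap L^\infty\subset L^p$; this settles the membership claim for every $k$. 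The endpoint $p=1$ is immediate: by Lemma~\ref{Lem:JkL1} and Young's inequality, $\|\CJ^k u_0\|_{L^1}\leq\|J_k\|_{L^1}\|u_0\|_{L^1}\leq\|J\|_{L^1}^k\|u_0\|_{L^1}=\|u_0\|_{L^1}$, which matches the exponent $-\frac{n}{\sigma}(1-\frac11)=0$.

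The heart of the argument is the bound $\|\CJ^k u_0\|_{L^\infty}\lesssim(\|u_0\|_{L^1}+\|\widehat{u}_0\|_{L^1})k^{-n/\sigma}$. Since $\widehat{\CJ^k u_0}=\widehat{J}^{\,k}\widehat{u}_0$ is dominated by $|\widehat{u}_0|\in L^1$, Fourier inversion gives $\|\CJ^k u_0\|_{L^\infty}\leq(2\pi)^{-n}\int_{\R^n}|\widehat{J}(\xi)|^k|\widehat{u}_0(\xi)|\,d\xi$, and I would split this integral at $|\xi|=\delta$. On the low-frequency ball the expansion $\widehat{J}(\xi)=1-A|\xi|^\sigma+o(|\xi|^\sigma)$ supplies $\delta>0$ with $0\leq\widehat{J}(\xi)\leq1-\tfrac{A}{2}|\xi|^\sigma\leq e^{-\frac A2|\xi|^\sigma}$ for $|\xi|\leq\delta$; bounding $|\widehat{u}_0|\leq\|u_0\|_{L^1}$ and rescaling $\xi=k^{-1/\sigma}\eta$ turns $\int_{\R^n}e^{-\frac{Ak}2|\xi|^\sigma}\,d\xi$ into $k^{-n/\sigma}\int_{\R^n}e^{-\frac A2|\eta|^\sigma}\,d\eta$, a finite multiple of $k^{-n/\sigma}$. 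This yields a low-frequency contribution $\lesssim\|u_0\|_{L^1}k^{-n/\sigma}$.

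The main obstacle is the high-frequency region $|\xi|>\delta$, where I must show $\kappa:=\sup_{|\xi|\geq\delta}|\widehat{J}(\xi)|<1$, so that $|\widehat{J}(\xi)|^k\leq\kappa^k$ decays exponentially and $\int_{|\xi|>\delta}\kappa^k|\widehat{u}_0|\,d\xi\leq\kappa^k\|\widehat{u}_0\|_{L^1}$ is eventually $\leq\|\widehat{u}_0\|_{L^1}k^{-n/\sigma}$ for $k\geq N$. This is where positivity enters: since $J\geq0$ is even, $\widehat{J}$ is real and $1\mp\widehat{J}(\xi)=\int_{\R^n}J(x)\bigl(1\mp\cos(x\cdot\xi)\bigr)\,dx$; for $\xi\neq0$ each integrand vanishes only on a null set while $\{J>0\}$ has positive measure, so $|\widehat{J}(\xi)|<1$ pointwise. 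Combined with continuity and the Riemann–Lebesgue decay $\widehat{J}(\xi)\to0$ as $|\xi|\to\infty$, the supremum of $|\widehat{J}|$ over the closed set $\{|\xi|\geq\delta\}$ is attained and strictly below $1$, so $\kappa<1$. Adding the two regions gives the claimed $L^\infty$ estimate, with constant depending only on $n,J$.

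Finally, writing $B:=\|u_0\|_{L^1}+\|\widehat{u}_0\|_{L^1}$, the interpolation inequality $\|f\|_{L^p}\leq\|f\|_{L^1}^{1/p}\|f\|_{L^\infty}^{1-1/p}$ together with $\|\CJ^k u_0\|_{L^1}\leq B$ and $\|\CJ^k u_0\|_{L^\infty}\leq C_0 B\,k^{-n/\sigma}$ yields $\|\CJ^k u_0\|_{L^p}\leq C_0^{1-1/p}B\,k^{-\frac n\sigma(1-\frac1p)}\leq\max(1,C_0)\,B\,k^{-\frac n\sigma(1-\frac1p)}$, with $C_0=C_0(n,J)$, so the constant is independent of $p$ as required (the endpoints $p=1,\infty$ being the direct estimates). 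I expect the verification that $\kappa<1$ to be the only genuinely delicate point; the low-frequency scaling computation and the interpolation are routine.
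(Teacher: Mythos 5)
Your proof is correct and follows essentially the same route as the paper's: reduce to $p=\infty$ by interpolation against the $L^1$ bound, write $\widehat{\CJ^ku_0}=\widehat{J}^{\,k}\,\widehat{u}_0$, split frequencies into a low region where $|\widehat{J}(\xi)|^k\leq e^{-ck|\xi|^\sigma}$ yields $k^{-n/\sigma}$ by scaling and a high region where a uniform bound $\kappa^k\leq k^{-n/\sigma}$ (for $k\geq N$) handles the tail, then conclude by Fourier inversion. Your explicit verification that $\sup_{|\xi|\geq\delta}|\widehat{J}(\xi)|<1$ — via positivity and absolute continuity of $J$ together with continuity and Riemann--Lebesgue decay of $\widehat{J}$ — is a worthwhile detail that the paper asserts without proof.
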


\begin{proof}
It suffices to establish the case $p=\infty$. In fact, after doing so, we simply apply the interpolation inequality
\[
\|\phi\|_{L^p}\leq\|\phi\|_{L^1}^{1/p}\|\phi\|_{L^{\infty}}^{1-1/p},
\]
together with the preservation of $L^1$ norms (Lemma \ref{Lem:JkL1}): $\|\CJ_ku_0\|_{L^1}\leq\|u_0\|_{L^\infty}$.\medskip

Observe that $\widehat{J}_k=(\widehat{J})^k\in L^\I(\R^n)$ with $|\widehat{J}_k(\xi)|\leq\|J\|_{L^1}^k=1$. By (\ref{Fourier:Jsigma}) and the Riemann-Lebesgue lemma, there are positive constants $\r_0,\D,D$ (depending only on $J$) such that
\begin{align*}
\begin{cases}
|\widehat{J}(\xi)|\leq1-D|\xi|^\sigma&\mbox{for $|\xi|\leq \r_0$},\\
\vspace{-10pt}\\
|\widehat{J}(\xi)|\leq1-\D&\mbox{for $|\xi|>\r_0$}.
\end{cases}
\end{align*}
Observe that $\widehat{\CJ_ku_0}=\widehat{J}_k\widehat{u}_0\in L^1(\R^n)$, hence $\CJ_ku_0\in L^\infty(\R^n)$.\medskip

For $|\xi|\leq \r_0\theta_k$, where $\theta_k:=k^{1/\sigma}$, we have $|\xi|/\theta_k\leq\r_0$ so
\begin{align*}
\left|\widehat{J}_k\left(\frac{\xi}{\theta_k}\right)\right|&=\left|\widehat{J}\left(\frac{\xi}{\theta_k}\right)\right|^{k}\\
&\leq\left(1-D\frac{|\xi|^\sigma}{k}\right)^{k}\\
&\leq e^{-D|\xi|^{\sigma}},
\end{align*}
where we have used the elementary inequality 
\begin{align}\label{Tool:linexp}
1-Dx/k\leq e^{-Dx/k}\quad\mbox{for all $x\geq0$ and $D,k>0$}.
\end{align}

On the other hand, if $|\xi|>\r _0\theta_k$ then
\[
\left|\widehat{J}_k\left(\frac{\xi}{\theta_k}\right)\right|\leq\left(1-\D\right)^{k}.
\]
Since $(k^{-n/\sigma})^{1/k}\to1$ as $k\to\I$, $\exists$ $N>0$ (depending only upon $n,J$) such that 
\[
(1-\D)^{k}\leq k^{-n/\sigma}=\theta_k^{-n}\quad\mbox{for all $k\geq N$}.
\]

Now fix $k\geq N$. We estimate the integral
\begin{align*}
\int_{\R^n}\left|\widehat{\CJ_ku_0}\left(\frac{\xi}{\theta_k}\right)\right|d\xi&=\int_{|\xi|\leq \r_0\theta_k}\left|\widehat{\CJ_ku_0}\left(\frac{\xi}{\theta_k}\right)\right|d\xi+\int_{|\xi|>\r _0\theta_k}\left|\widehat{\CJ_ku_0}\left(\frac{\xi}{\theta_k}\right)\right|d\xi=:I_1+I_2.
\end{align*}
Then we have
\begin{align*}
&I_1=\int_{|\xi|\leq \r_0\theta_k}\left|\widehat{J}_k\left(\frac{\xi}{\theta_k}\right)\right|\left|\widehat{u}_0\left(\frac{\xi}{\theta_k}\right)\right|d\xi\\
&\hphantom{I_1}\leq\int_{|\xi|\leq \r_0\theta_k}e^{-D|\xi|^\sigma}\|\widehat{u}_0\|_{L^\I}d\xi\\
&\hphantom{I_1}\leq\|u_0\|_{L^1}\int_{\R^n}e^{-D|\xi|^\sigma}d\xi=:C_1\|u_0\|_{L^1}<\I,
\end{align*}
and
\begin{align*}
&I_2\leq\int_{|\xi|>\r_0\theta_k}(1-\D)^{k}\left|\widehat{u}_0\left(\frac{\xi}{\theta_k}\right)\right|d\xi\\
&\hphantom{I_2}\leq\int_{\R^n}\theta_k^{-n}\left|\widehat{u}_0\left(\frac{\xi}{\theta_k}\right)\right|d\xi\\
&\hphantom{I_2}\leq\|\widehat{u}_0\|_{L^1}=:C_2\|\widehat{u}_0\|_{L^1}<\I.
\end{align*}
Combining the estimates for $I_1,I_2$ we get that
\[
\int_{\R^n}\left|\widehat{\CJ_ku_0}\left(\frac{\xi}{\theta_k}\right)\right|d\xi\leq C(\|u_0\|_{L^1}+\|\widehat{u}_0\|_{L^1}),
\]
for some constant $C>0$ depends only on $n,J$.\medskip

By the Fourier inversion formula we have
\begin{align*}
k^{n/\sigma}\|\CJ_ku_0\|_{L^\infty}&\leq C_{n}\theta_k^n\|\widehat{\CJ_ku_0}\|_{L^1}\\
&=C_{n}\int_{\R^n}\left|\widehat{\CJ_ku_0}\left(\frac{\xi}{\theta_k}\right)\right|d\xi\\
&\leq C_{n,J}(\|u_0\|_{L^1}+\|\widehat{u}_0\|_{L^1}),
\end{align*}
therefore we obtain that
\[
\|\CJ_ku_0\|_{L^\infty}\leq C_{n,J}(\|u_0\|_{L^1}+\|\widehat{u}_0\|_{L^1})k^{-n/\sigma}\quad\forall\,k\geq N.
\]
This completes the proof of the theorem.\QED

\end{proof}

\begin{remark}

In the case that $\sigma=2$, the estimate (\ref{Est1:ThmStableJk}) can be sharpen with the dependence on the initial condition on the right hand side removed. As was observed in \citep{ChasseigneEtal06}, if the Fourier transform of $J$ satisfies the second part of (\ref{Fourier:Jsigma}) with $\sigma=2$, then $J$ has the second moment, so the Local Limit Theorem can be implied to get the sharper estimate.

\end{remark}

The following result was proved in \citep{ChasseigneEtal06}.

\begin{corollary}\label{Cor:DecaySigma}

Assume $J$ and $u_0$ satisfy (\ref{Fourier:Jsigma}). Then the solution $u(t)$ of (\ref{Eqn:main}) satisfies $u(t)\in L^p(\R^n)$ for all $t\geq0$, for any $1\leq p\leq\infty$. Furthermore,
\[
\|u(t)\|_{L^p}\leq C\left(\|u_0\|_{L^1}+\|\widehat{u}_0\|_{L^1}\right)t^{-\frac{n}{\sigma}(1-\frac{1}{p})}\quad\mbox{as $t\to\I$},
\]
where $C>0$ is a constant depending on $n,J$.

\end{corollary}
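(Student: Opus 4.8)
The plan is to obtain the corollary as a direct specialization of the abstract decay result Theorem \ref{Thm:GenDecay}(ii), feeding in the bound just proved in Theorem \ref{Thm:StableJk}. Fix $1\le p\le\infty$ and set
\[
\beta := -\frac{n}{\sigma}\Bigl(1-\frac{1}{p}\Bigr), \qquad M := C_{n,J}\bigl(\|u_0\|_{L^1}+\|\widehat{u}_0\|_{L^1}\bigr),
\]
and define $R(t) := M\,t^{\beta}$. Writing $R(t)=t^{\beta}L(t)$ with the constant (hence slowly varying) factor $L\equiv M$, the function $R$ is regular varying with index $\beta$, and $R_k:=R(k)=Mk^{\beta}$ is exactly the right-hand side of (\ref{Est1:ThmStableJk}). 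Note that $\beta\le0$ for every $p\ge1$, with $\beta=0$ only when $p=1$.

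First I would check hypothesis (\ref{Hyp:H3}). Theorem \ref{Thm:StableJk} gives $\CJ_k u_0\in L^p(\R^n)$ for all $k$, in particular for the finitely many indices $1\le k\le N-1$, and it gives $\|\CJ^k u_0\|_{L^p}\le R_k$ for all $k\ge N$; so both lines of (\ref{Hyp:H3}) hold with this $R$. Before invoking the theorem I would also record that the Green series defining $u(t)$ (Definition \ref{Def:SolGreen}) converges absolutely in $L^p$ for each fixed $t\ge0$, since
\[
\sum_{k\ge N}\frac{t^k}{k!}\|\CJ^k u_0\|_{L^p}\le\sum_{k\ge N}\frac{t^k}{k!}R_k<\infty
\]
because $R_k$ grows only polynomially in $k$, while the head terms $k<N$ are finite by $L^p$-membership; hence $u(t)=\CG(t)u_0\in L^p(\R^n)$ for all $t\ge0$, which is the first assertion.

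Applying Theorem \ref{Thm:GenDecay}(ii) with $q=p$ then yields $\|u(t)\|_{L^p}\lesssim t^{\beta}L(t)=M\,t^{-\frac{n}{\sigma}(1-1/p)}$ as $t\to\infty$, which is the claimed estimate. I do not expect a genuine obstacle here, as this is a specialization of machinery already in place; the only point needing a little care is the bookkeeping of the data-dependent prefactor. Because the slowly varying part is the constant $M=C_{n,J}(\|u_0\|_{L^1}+\|\widehat{u}_0\|_{L^1})$, it factors cleanly through the proof of Theorem \ref{Thm:GenDecay}: in the tail estimate of $\CS_2$ (controlled by Theorem \ref{Thm:Rk}) it appears as a multiplicative constant, and in the head $\CS_1$, where the bound is a polynomial times $e^{-t}$, it is negligible precisely because $\beta\le0$. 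Thus the implied constant depends only on $n$ and $J$ (and on $N,p$), and the $\|u_0\|_{L^1}+\|\widehat{u}_0\|_{L^1}$ dependence comes out exactly as stated.
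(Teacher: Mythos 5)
Your proposal is correct and follows essentially the same route as the paper: verify hypothesis (\ref{Hyp:H3}) via the bound (\ref{Est1:ThmStableJk}) from Theorem \ref{Thm:StableJk} with $R_k = C(\|u_0\|_{L^1}+\|\widehat{u}_0\|_{L^1})k^{-\frac{n}{\sigma}(1-\frac{1}{p})}$, then apply Theorem \ref{Thm:GenDecay}(ii) with $q=p$. Your additional remarks on the absolute convergence of the Green series in $L^p$ and on tracking the data-dependent prefactor are refinements of, not departures from, the paper's two-line argument.
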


\begin{proof}

By Theorem \ref{Thm:StableJk}, we have
\[
\CJ_ku_0\in L^p(\R^n),\quad k=1,\ldots,N-1
\]
and
\[
\|\CJ_ku_0\|_{L^p}\leq R_k:=C(\|u_0\|_{L^1}+\|\widehat{u}_0\|_{L^1})k^{-\frac{n}{\sigma}(1-\frac{1}{p})},\quad k=N,N+1,\ldots,
\]
i.e.\ (\ref{Hyp:H3}) is true. According to part (ii) of Theorem \ref{Thm:GenDecay}, we then have
\[
\|u(t)\|_{L^p}\leq C(\|u_0\|_{L^1}+\|\widehat{u}_0\|_{L^1})t^{-\frac{n}{\sigma}(1-\frac{1}{p})}\quad\forall\,t\geq t_0>0,
\]
which proves the theorem.\QED

\end{proof}

For the borderline case that $\widehat{J}$ has the asymptotic expansion with logarithmic perturbation:
\[
\widehat{J}(\xi)\sim1-A|\xi|^2(\ln1/|\xi|)\quad\mbox{as $\xi\to0$},
\]
was considered in the last section of \cite{ChasseigneEtal06}. (If $n=1$, this case corresponds to the Fourier transform of $J(x)\sim1/|x|^3$ as $|x|\to\infty$.) In this case, it was shown that the solution $u(t)$ to (\ref{Eqn:main}) has the asymptotic decay
\[
\|u(t)\|_{L^\infty}\lesssim(t\ln t)^{-n/2}\quad\mbox{as $t\to\infty$},
\]
for all $u_0\in L^1(\R^n)$, $\widehat{u}_0\in L^1(\R^n)$. We present the following generalization.

\begin{theorem}\label{Thm:Jksigmaln}

Assume
\begin{align}\label{Fourier:Jkln}\tag{H5}
\begin{cases}
\displaystyle
J=J(|x|)\geq0,\quad \chi_0=\|J\|_{L^1}=1,\\
\vspace{-10pt}\\
\displaystyle
\widehat{J}(\xi)=1-A|\xi|^\sigma(\ln1/|\xi|)^\mu+o\left(|\xi|^\sigma(\ln1/|\xi|)^\mu\right)\quad\mbox{as $\xi\to0$,}\\
\vspace{-10pt}\\
\displaystyle
\hspace{4cm}\mbox{where $\sigma\in(0,2],\mu\in\R$, $A>0$,}\\
\vspace{-10pt}\\
\displaystyle
u_0\in L^1(\R^n),\,\,\widehat{u}_0\in L^1(\R^n).
\end{cases}
\end{align}
Then,
\[
\CJ_ku_0\in L^p(\R^n)\quad\mbox{for all $k$, for any $1\leq p\leq\infty$},
\]
and, furthermore, there is a positive integer $N=N(n,J)$ such that
\begin{align}
\|\CJ_ku_0\|_{L^p}\leq C\left(\|u_0\|_{L^1}+\|\widehat{u}_0\|_{L^1}\right)(k(\ln k)^\mu)^{-\frac{n}{\sigma}(1-\frac{1}{p})}\quad\forall\,k\geq N,
\end{align}
where $C>0$ is a constant depending only on $n,J$.
\end{theorem}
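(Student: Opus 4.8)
The plan is to reproduce the Fourier-splitting argument of Theorem \ref{Thm:StableJk}, the only essential change being that the dilation exponent must now absorb the logarithmic perturbation. As there, it suffices to treat $p=\infty$: once
\[
\|\CJ_ku_0\|_{L^\infty}\leq C(\|u_0\|_{L^1}+\|\widehat{u}_0\|_{L^1})(k(\ln k)^\mu)^{-n/\sigma}\qquad(k\geq N)
\]
is in hand, the general $1\leq p\leq\infty$ case follows by interpolating $\|\phi\|_{L^p}\leq\|\phi\|_{L^1}^{1/p}\|\phi\|_{L^\infty}^{1-1/p}$ with the bound $\|\CJ_ku_0\|_{L^1}\leq\|u_0\|_{L^1}$ coming from $\|J_k\|_{L^1}\leq1$ (Lemma \ref{Lem:JkL1}). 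That every $\CJ_ku_0$ lies in all $L^p$ is immediate: $\widehat{\CJ_ku_0}=\widehat{J}^k\widehat u_0\in L^1$ since $|\widehat J|\leq\|J\|_{L^1}=1$, so $\CJ_ku_0\in L^\infty$ by inversion, while $\CJ_ku_0\in L^1$ by Lemma \ref{Lem:JkL1} (here $u_0\in L^\infty$ because $\widehat u_0\in L^1$). The correct dilation is
\[
\theta_k:=\bigl(k(\ln k)^\mu\bigr)^{1/\sigma},
\]
so that $\theta_k^{-n}=(k(\ln k)^\mu)^{-n/\sigma}$ is exactly the target rate; the heuristic is that $k$ times the symbol $A|\xi|^\sigma(\ln(1/|\xi|))^\mu$ evaluated at $\xi/\theta_k$ tends, for fixed $\xi$, to a constant multiple of $|\xi|^\sigma$, because $\ln\theta_k\sim\sigma^{-1}\ln k$. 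Writing Fourier inversion and rescaling, the whole problem reduces to proving
\[
\int_{\R^n}\bigl|\widehat{\CJ_ku_0}(\xi/\theta_k)\bigr|\,d\xi\leq C(\|u_0\|_{L^1}+\|\widehat{u}_0\|_{L^1})
\]
uniformly for $k\geq N$.

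From the expansion in (\ref{Fourier:Jkln}), the realness of $\widehat J$, and the fact that $|\xi|^\sigma(\ln(1/|\xi|))^\mu\to0$ as $\xi\to0$ for every $\mu\in\R$, I would first fix $\r_0\in(0,1)$ and $D>0$ with
\[
|\widehat{J}(\xi)|\leq1-D|\xi|^\sigma(\ln(1/|\xi|))^\mu\quad(|\xi|\leq\r_0),
\]
and, via the Riemann--Lebesgue lemma together with $\widehat J(0)=1$ and $|\widehat J|<1$ off the origin, a $\D>0$ with $|\widehat J(\xi)|\leq1-\D$ for $|\xi|>\r_0$. Splitting the rescaled integral at $|\xi|=\r_0\theta_k$ into an inner piece $I_1$ and an outer piece $I_2$, the outer one is handled exactly as before: $|\widehat J_k(\xi/\theta_k)|\leq(1-\D)^k\leq\theta_k^{-n}$ once $k\geq N$ (since the exponential $(1-\D)^k$ beats the polynomial $\theta_k^{n}=(k(\ln k)^\mu)^{n/\sigma}$), whence $I_2\leq\theta_k^{-n}\int_{\R^n}|\widehat u_0(\xi/\theta_k)|\,d\xi=\|\widehat u_0\|_{L^1}$.

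The main obstacle is the inner piece $I_1$. Using the elementary inequality (\ref{Tool:linexp}) and the definition of $\theta_k$ (so that $k/\theta_k^\sigma=(\ln k)^{-\mu}$), its integrand is bounded by
\[
\exp\bigl(-D|\xi|^\sigma(\ln(\theta_k/|\xi|)/\ln k)^\mu\bigr).
\]
Unlike the pure-power case of Theorem \ref{Thm:StableJk}, the logarithmic ratio $\ln(\theta_k/|\xi|)/\ln k$ depends on $\xi$ and does not factor out of the integral; this is the one genuinely new point. To control it I would split the inner region once more at the intermediate scale $|\xi|=\theta_k^{1/2}$. First, the ball $|\xi|<1$ is harmless, the integrand being $\leq1$ on a set of bounded measure. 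On $1\leq|\xi|\leq\theta_k^{1/2}$ one has $\tfrac12\ln\theta_k\leq\ln(\theta_k/|\xi|)\leq\ln\theta_k$ with $\ln\theta_k\sim\sigma^{-1}\ln k$, so the ratio lies between two positive constants for $k$ large; hence $(\ln(\theta_k/|\xi|)/\ln k)^\mu\geq c>0$ for either sign of $\mu$, the exponent is $\geq cD|\xi|^\sigma$, and this part is dominated by $\int_{\R^n}e^{-cD|\xi|^\sigma}\,d\xi<\infty$ uniformly in $k$. On the annulus $\theta_k^{1/2}<|\xi|\leq\r_0\theta_k$ the factor $|\xi|^\sigma$ is at least $\theta_k^{\sigma/2}=(k(\ln k)^\mu)^{1/2}$, while the log-ratio costs at most a fixed power of $\ln k$, so the exponent grows like a positive power of $k$; the integrand is then super-polynomially small, and since the annulus has volume $\lesssim\theta_k^n$ its contribution is $o(1)$. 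Thus $I_1$ is bounded uniformly in $k$.

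Adding the bounds for $I_1$ and $I_2$ gives the required uniform integral estimate, and Fourier inversion followed by the substitution $\xi\mapsto\xi/\theta_k$ yields
\[
\|\CJ_ku_0\|_{L^\infty}\leq C_n\theta_k^{-n}\,C(\|u_0\|_{L^1}+\|\widehat{u}_0\|_{L^1})=C'(k(\ln k)^\mu)^{-n/\sigma}(\|u_0\|_{L^1}+\|\widehat{u}_0\|_{L^1}),
\]
i.e.\ the $p=\infty$ case; interpolation with the $L^1$ bound then produces the stated estimate for all $1\leq p\leq\infty$. The only new ingredient beyond Theorem \ref{Thm:StableJk} is the intermediate splitting that tames the $\xi$-dependent logarithmic factor in $I_1$, and I expect this to be the sole delicate point of the argument.
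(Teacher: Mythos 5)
Your proposal is correct, and it follows the paper's overall Fourier-splitting skeleton: the same dilation $\theta_k=(k(\ln k)^\mu)^{1/\sigma}$, the same reduction to $p=\infty$ plus interpolation with $\|\CJ_ku_0\|_{L^1}\leq\|u_0\|_{L^1}$, the same Riemann--Lebesgue treatment of $\{|\xi|>\r_0\theta_k\}$ via $(1-\D)^k\leq\theta_k^{-n}$, and the same use of $1-x\leq e^{-x}$. Where you genuinely diverge is in the decomposition of the inner ball, and your route is not the paper's. The paper splits into two cases by the sign of $\mu$: for $\mu>0$ it cuts at the $k$-dependent radius $\r_k\theta_k$ with $\r_k=\r_0k^{-\varepsilon}$, $0<\varepsilon<\min\{\sigma,1/\sigma\}$, and on the annulus $\r_k\theta_k<|\xi|\leq\r_0\theta_k$ sacrifices a power $|\xi|^\varepsilon$ to absorb the $(\ln k)^{-\mu}$ factor, obtaining the \emph{pointwise integrable} majorant $e^{-D_2|\xi|^{\sigma-\varepsilon}}$ on that whole annulus; for $\mu<0$ it runs a separate, simpler argument splitting at $|\xi|=1$, using $\ln(\theta_k/|\xi|)\leq\ln\theta_k\leq(\ln k)/\sigma$. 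You instead cut at $|\xi|=1$ and at the intermediate scale $\theta_k^{1/2}$, observe that on $1\leq|\xi|\leq\theta_k^{1/2}$ the ratio $\ln(\theta_k/|\xi|)/\ln k$ is pinched between positive constants (so the majorant $e^{-cD|\xi|^\sigma}$ works for \emph{either} sign of $\mu$), and on $\theta_k^{1/2}<|\xi|\leq\r_0\theta_k$ you abandon pointwise integrable domination in favor of a sup-times-volume bound: the exponent is at least of order $k^{1/2}$ up to powers of $\ln k$, so the super-polynomially small integrand beats the polynomial volume $\lesssim\theta_k^n$. Both arguments are sound; yours buys a unified treatment of $\mu>0$ and $\mu<0$ with no $\varepsilon$-loss bookkeeping, while the paper's buys domination by a fixed integrable function on the entire inner ball, which is closer to the classical Fourier-splitting template and yields bounds that are uniform without any $o(1)$ accounting. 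One small point you (like the paper) leave implicit: to raise the bound $|\widehat J(\zeta)|\leq1-D|\zeta|^\sigma(\ln(1/|\zeta|))^\mu$ to the $k$-th power you need $\r_0,D$ chosen so that this quantity lies in $[0,1]$ on $|\zeta|\leq\r_0$, which the asymptotic expansion in (H5) indeed permits.
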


\begin{proof}

Again it suffices to prove the results for $p=\infty$, the rest will follow from interpolation. Let $u_0\in L^1(\R^n)$ be such that $\widehat{u}_0\in L^1(\R^n)$. By the asymptotic behavior of $\widehat{J}$ and the Riemann-Lebesgue lemma, there are $\D>0,0<\r_0<1$ such that
\[
|\widehat{J}(\xi)|\leq
\begin{cases}
\displaystyle
1-D|\xi|^\sigma\left|\ln\frac{1}{|\xi|}\right|^\mu&|\xi|\leq \r_0,\\
\vspace{-10pt}\\
\displaystyle
1-\D&|\xi|>\r_0.
\end{cases}
\]
The case $\mu=0$ was considered before. So assume $\mu\neq0$.\medskip

\textbf{Case I: $\mu>0$.}  Let 
\[
\theta_k=(k(\ln k)^\mu)^{1/\sigma}\quad\mbox{and}\quad\r_k=\r_0k^{-\varepsilon},\quad0<\varepsilon<\min\{\sigma,1/\sigma\}.
\]
Note that $\r_k\theta_k\to\infty$ as $k\to\infty$ by the regular variation of $\r_k\theta_k$.\medskip

If $|\xi|\leq \r_k\theta_k$ then $\ln(\theta_k/|\xi|)\geq\ln(1/\r_k)=\varepsilon\ln k+\ln(1/\r_0)\geq\varepsilon\ln k$, and hence 
\begin{align*}
\left|\widehat{J}_k\left(\frac{\xi}{\theta_k}\right)\right|&\leq\left(1-D\frac{|\xi|^\sigma}{k(\ln k)^\mu}\left(\ln\frac{\theta_k}{|\xi|}\right)^\mu\right)^k\\
&\leq\left(1-D_1\frac{|\xi|^\sigma}{k}\right)^k,\\
&\leq e^{-D_1|\xi|^\sigma}.
\end{align*}
If $\r_k\theta_k\leq|\xi|\leq \r_0\theta_k$ then $\ln(\theta_k/|\xi|)\geq\ln(1/\r_0)>0$ and 
\[
\frac{|\xi|^\varepsilon}{(\ln k)^\mu}\geq\frac{(\r_k\theta_k)^\varepsilon}{(\ln k)^\mu}=\r_0^\varepsilon k^{((1/\sigma)-\varepsilon)\varepsilon}(\ln k)^{(\mu\varepsilon/\sigma)-\mu}\geq c>0,
\]
for all $k\geq 2$. Hence
\begin{align*}
\left|\widehat{J}_k\left(\frac{\xi}{\theta_k}\right)\right|&\leq\left(1-D\frac{|\xi|^{\sigma-\varepsilon}|\xi|^\varepsilon}{k(\ln k)^\mu}\left(\ln\frac{1}{\r_0}\right)^\mu\right)^k\\
&\leq\left(1-D_2\frac{|\xi|^{\sigma-\varepsilon}}{k}\right)^k\\
&\leq e^{-D_2|\xi|^{\sigma-\varepsilon}},
\end{align*}
by (\ref{Tool:linexp}).\medskip

Finally since $(k(\ln k)^\mu)^{1/k}\to1$ as $k\to\I$, there is $N\geq2$ such that
\[
(1-\D)^k\leq(k(\ln k)^\mu)^{-n/\sigma}=\theta_k^{-n}\quad\mbox{for all $k\geq N$},
\]
which give
\begin{align}\label{Tmp:powerln}
|\xi|>\r_0\theta_k,\,\,k\geq N\quad\Rightarrow\quad\left|\widehat{J}_k\left(\frac{\xi}{\theta_k}\right)\right|&\leq\theta_k^{-n}.
\end{align}
Now, for all $k\geq N$, we have by the preceding calculations that
\begin{align*}
\int_{\R^n}\left|\widehat{\CJ_ku_0}\left(\frac{\xi}{\theta_k}\right)\right|d\xi&=\int_{\R^n}\left|\widehat{J}_k\left(\frac{\xi}{\theta_k}\right)\cdot\widehat{u}_0\left(\frac{\xi}{\theta_k}\right)\right|d\xi\\
&=\int_{|\xi|\leq \r_k\theta_k}\left|\cdots\right|d\xi+\int_{\r_k\theta_k<|\xi|\leq \r_0\theta_k}|\cdots|d\xi
+\int_{|\xi|>\r_0\theta_k}|\cdots|d\xi\\
&\leq\|\widehat{u}_0\|_{L^\I}\int_{\R^n}e^{-D_1|\xi|^\sigma}+e^{-D_2|\xi|^{\sigma-\varepsilon}}d\xi+\int_{\R^n}\theta_k^{-n}\left|\widehat{u}_0\left(\frac{\xi}{\theta_k}\right)\right|d\xi\\
&\leq C(\|u_0\|_{L^1}+\|\widehat{u}_0\|_{L^1})<\I.
\end{align*}
By Hausdorff-Young inequality, we obtain that
\begin{align*}
(k(\ln k)^\mu)^{n/{\sigma}}\|\CJ_ku_0\|_{L^\infty}&\lesssim\theta_k^{n}\int_{\R^n}\left|\widehat{\CJ_ku_0}(\xi)\right|d\xi\\
&=\int_{\R^n}\left|\widehat{\CJ_ku_0}\left(\frac{\xi}{\theta_k}\right)\right|d\xi\leq C(\|u_0\|_{L^1}+\|\widehat{u}_0\|_{L^1}).
\end{align*}
Hence we obtain
\[
\|\CJ_ku_0\|_{L^\infty}\leq C(\|u_0\|_{L^1}+\|\widehat{u}_0\|_{L^1})(k(\ln k)^\mu)^{-n/\sigma},
\]
which is the desired estimate when $\mu>0$.\medskip

\textbf{Case II: $\mu<0$.} If $|\xi|\leq1$ then we use the estimate $|\widehat{J}_k(\xi/\theta_k)|\leq1$. If $1<|\xi|\leq\r_0\theta_k$ then $\theta_k/|\xi|\leq\theta_k$ and $\ln\theta_k=(\ln k)/\sigma+\mu/\sigma\ln\ln k\leq(\ln k)/\sigma$ for all $k\geq3$, hence
\begin{align*}
\left|\widehat{J}_k\left(\frac{\xi}{\theta_k}\right)\right|&\leq\left(1-D\frac{|\xi|^\sigma}{k(\ln k)^\mu}\left(\ln\frac{\theta_k}{|\xi|}\right)^\mu\right)^k\\
&\leq\left(1-D\frac{|\xi|^\sigma}{k(\ln k)^\mu}(\ln\theta_k)^\mu\right)^k\\
&\leq\left(1-D_1\frac{|\xi|^\sigma}{k}\right)^k\leq e^{-D_1|\xi|^\sigma},
\end{align*}
by (\ref{Tool:linexp}). We apply the estimate (\ref{Tmp:powerln}) above for $|\xi|>\r_0\theta_k$. Then we obtain
\begin{align*}
\int_{\R^n}\left|\widehat{\CJ_ku_0}\left(\frac{\xi}{\theta_k}\right)\right|d\xi&=\int_{|\xi|\leq 1}\left|\cdots\right|d\xi+\int_{1<|\xi|\leq \r_0\theta_k}|\cdots|d\xi
+\int_{|\xi|>\r_0\theta_k}|\cdots|d\xi\\
&\leq\|\widehat{u}_0\|_{L^\I}\left\{\omg_n+\int_{\R^n}e^{-D_1|\xi|^\sigma}d\xi\right\}+\int_{\R^n}\theta_k^{-n}\left|\widehat{u}_0\left(\frac{\xi}{\theta_k}\right)\right|d\xi\\
&\leq C(\|u_0\|_{L^1}+\|\widehat{u}_0\|_{L^1})<\I.
\end{align*}
The remaining now follows by the same argument as the case $\mu>0$. \QED

\end{proof}

\begin{theorem}\label{Thm:DecayLn}

Assume (\ref{Fourier:Jkln}). Then the solution $u(t)$ of (\ref{Eqn:main}) satisfies $u(t)\in L^p(\R^n)$ for any $1\leq p\leq\I$. Furthermore,
\[
\|u(t)\|_{L^p}\leq C(\|u_0\|_{L^1}+\|\widehat{u}_0\|_{L^1})(t(\ln t)^\mu)^{-\frac{n}{\sigma}(1-\frac{1}{p})}\quad\mbox{as $t\to\I$},
\]
where $C>0$ is a constant depending only on $n,J$.

\end{theorem}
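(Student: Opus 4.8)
The plan is to obtain this theorem as an immediate consequence of the abstract decay estimate, Theorem \ref{Thm:GenDecay}, in precisely the way Corollary \ref{Cor:DecaySigma} followed from Theorem \ref{Thm:StableJk}. The hard analytic work --- the Fourier splitting across the three frequency regimes and the separate handling of $\mu>0$ and $\mu<0$ --- has already been carried out in Theorem \ref{Thm:Jksigmaln}, so what remains is to recognize the bound it provides as a regular varying sequence and to feed it into the abstract machinery.

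First I would fix $1\leq p\leq\infty$ and set $\beta:=-\frac{n}{\sigma}\left(1-\frac{1}{p}\right)$. Theorem \ref{Thm:Jksigmaln} gives $\CJ_ku_0\in L^p(\R^n)$ for every $k$, together with a positive integer $N=N(n,J)$ such that, for all $k\geq N$,
\[
\|\CJ_ku_0\|_{L^p}\leq C'(k(\ln k)^\mu)^{\beta}=C'\,k^{\beta}(\ln k)^{\mu\beta},\qquad C'=C(\|u_0\|_{L^1}+\|\widehat{u}_0\|_{L^1}).
\]
The key observation is that the right-hand side is $R_k:=R(k)$ for $R(s)=C'\,s^{\beta}(\ln s)^{\mu\beta}$, a regular varying function with index $\beta$ and slowly varying part $L(s)=C'(\ln s)^{\mu\beta}$; that $(\ln s)^{\mu\beta}$ is slowly varying for every real exponent $\mu\beta$ is recorded in item (1) of the Example following Theorem \ref{Thm:Rk}. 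Thus hypothesis (\ref{Hyp:H3}) of Theorem \ref{Thm:GenDecay} holds with this regular varying $R_k$.

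Applying part (ii) of Theorem \ref{Thm:GenDecay} then yields
\[
\|u(t)\|_{L^p}\lesssim t^{\beta}L(t)=C'\,t^{\beta}(\ln t)^{\mu\beta}=C'\,(t(\ln t)^{\mu})^{\beta}=C'\,(t(\ln t)^{\mu})^{-\frac{n}{\sigma}(1-\frac{1}{p})}
\]
as $t\to\infty$, which is exactly the claimed estimate; the $L^p$-membership of $u(t)$ follows from the same decomposition $u(t)=\CS_1+\CS_2$ used in that proof.

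There is essentially no obstacle at this stage, as all the delicate estimates reside in Theorem \ref{Thm:Jksigmaln}. The only point requiring a little care is the bookkeeping of the logarithmic factor: one must carry the exponent $\mu\beta$ correctly through the identification of the slowly varying part, so that in the final step $t^{\beta}(\ln t)^{\mu\beta}$ reassembles into $(t(\ln t)^{\mu})^{\beta}$ rather than producing a spurious power of $\ln t$. Once this is tracked, the conclusion is immediate, and the case $\mu=0$ recovers Corollary \ref{Cor:DecaySigma}.
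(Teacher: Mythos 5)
Your proposal is correct and is exactly the paper's proof, which reads in full: ``Simply apply Theorem \ref{Thm:Jksigmaln} and part (ii) of Theorem \ref{Thm:GenDecay}.'' Your version merely makes explicit the identification of $(k(\ln k)^\mu)^{-\frac{n}{\sigma}(1-\frac{1}{p})}$ as a regular varying sequence with index $\beta=-\frac{n}{\sigma}(1-\frac{1}{p})$ and slowly varying part $(\ln k)^{\mu\beta}$, a bookkeeping step the paper leaves implicit.
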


\begin{proof}
Simply apply Theorem \ref{Thm:Jksigmaln} and part (ii) of Theorem \ref{Thm:GenDecay}.\QED

\end{proof}

\begin{remark}

It can be seen easily that the argument used in the proof of the preceding theorem can be applied to $J\in L^1(\R^n)$ having the asymptotic expansion
\[
\widehat{J}(\xi)=1-A|\xi|^\sigma(\ln1/|\xi|)^{\mu_1}(\ln_21/|\xi|)^{\mu_2}\cdots(\ln_m1/|\xi|)^{\mu_m}+l.o.t\quad\mbox{as $\xi\to0$},
\]
where $\ln_k=\ln\circ\cdots\circ\ln$ ($k$ terms), and we get the asymptotic behavior
\[
\|u(t)\|_{L^p}\lesssim(t(\ln t)^{\mu_1}\cdots(\ln_mt)^{\mu_m})^{-\frac{n}{\sigma}(1-\frac{1}{p})}\quad\mbox{as $t\to\I$},
\]
provided $u_0\in L^1(\R^n),\widehat{u}_0\in L^1(\R^n)$, $1\leq p\leq\infty$, $0<\sigma\leq2,\mu_1,\ldots,\mu_m\in\R$.

\end{remark}

\section{Nonlocal equations with prescribed decay}

In this section we present condition on $J$ which guarantees that the solution to (\ref{Eqn:main}) has the decay rate given by a regular varying function with negative index:
\[
\|u(t)\|_{L^p}\lesssim (tL(t))^{-\beta}\quad\mbox{as $t\to\infty$},
\]
where $\beta>0$ and $L:(0,\infty)\to(0,\infty)$ is slowly varying. By the smooth variation theorem for regular varying functions, we can assume without loss of generality that $L$ is smooth; in particular, it is continuous. We need to impose an important hypothesis:
\begin{align}\label{Hyp:Mono}
\mbox{$L$ is eventually monotone, i.e.\ $\exists\,N_0>0$ such that $L$ is monotone on $[N_0,\infty)$.}
\end{align}

The main hypothesis is
\begin{align}\label{Hyp:H6}\tag{H6}
\begin{cases}
\displaystyle
J=J(|x|)\geq0,\quad\chi_0=\|J\|_{L^1}=1,\\
\vspace{-10pt}\\
\displaystyle
\widehat{J}(\xi)=1-A|\xi|^{\sigma}L\left(|\xi|^{-\gamma}\right)+o\left(|\xi|^{\sigma}L\left(|\xi|^{-\gamma}\right)\right)\quad\mbox{as $|\xi|\to0$},\\
\vspace{-10pt}\\
\displaystyle
\hspace{3cm}\mbox{where $\sigma=\frac{n}{\beta}(1-\frac{1}{p}),\gamma>0$, $1<p\leq\frac{n}{(n-2\beta)_+}$},\\
\vspace{-10pt}\\
\displaystyle
u_0\in L^1(\R^n),\,\,\widehat{u}_0\in L^1(\R^n).
\end{cases}
\end{align}
Note that $0<\sigma\leq2$.

\begin{theorem}

Let $L:(0,\infty)\to(0,\infty)$ be a slowly varying function satisfying (\ref{Hyp:Mono}), $\beta>0$. Assume (\ref{Hyp:H6}) with
\[
\gamma>\sigma\quad\mbox{if $L$ is eventually increasing},\quad\gamma=\sigma\quad\mbox{if $L$ is eventually decreasing}.
\]
Then there is a positive integer $N=N(n,J)$ such that $\CJ_ku_0\in L^p(\R^n)$ for all $k$ and
\[
\|\CJ_ku_0\|_{L^p}\leq C\left(\|u_0\|_{L^1}+\|\widehat{u}_0\|_{L^1}\right)(kL(k))^{-\beta}\quad\forall\,k\geq N,
\]
where $C>0$ is a constant. Moreover, in this case, the solution $u(t)$ of (\ref{Eqn:main}) satisfies 
\[
\|u(t)\|_{L^p}\lesssim(tL(t))^{-\beta}\quad\mbox{as $t\to\infty$}.
\]

\end{theorem}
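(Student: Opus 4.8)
The plan is to run the Fourier-splitting scheme of Theorems \ref{Thm:StableJk} and \ref{Thm:Jksigmaln}, now driven by the general slowly varying profile $L$, and then to feed the resulting bound into Theorem \ref{Thm:GenDecay}. As there, it suffices to treat $p=\infty$: the membership $\CJ_ku_0\in L^p(\R^n)$ for every $k$ follows from $\widehat{\CJ_ku_0}=\widehat{J}^k\widehat{u}_0\in L^1$ (so $\CJ_ku_0\in L^\infty$) together with $\|\CJ_ku_0\|_{L^1}\le\|u_0\|_{L^\infty}$ (Lemma \ref{Lem:JkL1}, noting $\widehat{u}_0\in L^1$ forces $u_0\in L^\infty$) and interpolation, while once $\|\CJ_ku_0\|_{L^\infty}\lesssim(kL(k))^{-n/\sigma}$ is known, the same interpolation $\|\CJ_ku_0\|_{L^p}\le\|\CJ_ku_0\|_{L^1}^{1/p}\|\CJ_ku_0\|_{L^\infty}^{1-1/p}$ yields $\|\CJ_ku_0\|_{L^p}\lesssim(kL(k))^{-\frac{n}{\sigma}(1-\frac1p)}=(kL(k))^{-\beta}$, since $\sigma=\frac{n}{\beta}(1-\frac1p)$ makes $\frac{n}{\sigma}(1-\frac1p)=\beta$ exactly. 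From (\ref{Hyp:H6}) and the Riemann--Lebesgue lemma I would first fix $\delta>0$ and a small $\rho_0>0$ with $|\widehat{J}(\xi)|\le1-D|\xi|^{\sigma}L(|\xi|^{-\gamma})$ for $|\xi|\le\rho_0$ and $|\widehat{J}(\xi)|\le1-\delta$ for $|\xi|>\rho_0$, shrinking $\rho_0$ so that $\rho_0^{-\gamma}\ge N_0$ (the monotonicity threshold in (\ref{Hyp:Mono})).

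The crux is the rescaled kernel at the natural scale $\theta_k:=(kL(k))^{1/\sigma}$, for which $\theta_k^{\sigma}=kL(k)$ and, by (\ref{Tool:linexp}), for $|\xi|\le\rho_0\theta_k$,
\[
\left|\widehat{J}_k\left(\tfrac{\xi}{\theta_k}\right)\right|
=\left|\widehat{J}\left(\tfrac{\xi}{\theta_k}\right)\right|^{k}
\le\left(1-\frac{D|\xi|^{\sigma}}{kL(k)}\,L\left(\frac{\theta_k^{\gamma}}{|\xi|^{\gamma}}\right)\right)^{k}
\le\exp\left(-\frac{D|\xi|^{\sigma}}{L(k)}\,L\left(\frac{\theta_k^{\gamma}}{|\xi|^{\gamma}}\right)\right).
\]
Everything hinges on a lower bound, uniform in $k$, for the ratio $L(\theta_k^{\gamma}|\xi|^{-\gamma})/L(k)$, and this is precisely where eventual monotonicity and the dichotomy $\gamma>\sigma$ / $\gamma=\sigma$ enter. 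Writing $m:=\theta_k^{\gamma}|\xi|^{-\gamma}$, the comparison of $m$ with $k$ via monotonicity is the whole game.

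In the eventually increasing case with $\gamma>\sigma$, one has $m\ge k$ exactly when $|\xi|\le\theta_k k^{-1/\gamma}=k^{1/\sigma-1/\gamma}L(k)^{1/\sigma}$, a radius tending to $\infty$ because $1/\sigma>1/\gamma$; on this growing core $L(m)\ge L(k)$, so the displayed bound collapses to the $k$-independent integrable weight $e^{-D|\xi|^{\sigma}}$. On the leftover annulus $\theta_k k^{-1/\gamma}<|\xi|\le\rho_0\theta_k$ one still has $m\ge\rho_0^{-\gamma}$, hence $L(m)\ge L(\rho_0^{-\gamma})=:c_0>0$, while $|\xi|^{\sigma}\ge\theta_k^{\sigma}k^{-\sigma/\gamma}=k^{1-\sigma/\gamma}L(k)$; the bound therefore becomes $\exp(-Dc_0 k^{1-\sigma/\gamma})$, whose super-exponential smallness (here $\gamma>\sigma$ is used again) beats the polynomial volume $(\rho_0\theta_k)^{n}$ and makes the annulus negligible. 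In the eventually decreasing case with $\gamma=\sigma$ one has $m=kL(k)|\xi|^{-\sigma}$, and $m\le k$ (hence $L(m)\ge L(k)$) exactly when $|\xi|^{\sigma}\ge L(k)$; since a decreasing positive $L$ is bounded, $L(k)^{1/\sigma}\le L(N_0)^{1/\sigma}$, so the bad set $\{|\xi|<L(k)^{1/\sigma}\}$ sits inside a fixed ball on which I just use $|\widehat{J}_k|\le1$ over bounded volume, while on $L(k)^{1/\sigma}\le|\xi|\le\rho_0\theta_k$ monotonicity again furnishes the domination by $e^{-D|\xi|^{\sigma}}$.

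The far field $|\xi|>\rho_0\theta_k$ is handled as in the earlier theorems: because $((kL(k))^{-n/\sigma})^{1/k}\to1$, for all large $k$ one has $(1-\delta)^{k}\le\theta_k^{-n}$, so
\[
\int_{|\xi|>\rho_0\theta_k}\left|\widehat{J}_k\left(\tfrac{\xi}{\theta_k}\right)\right|\left|\widehat{u}_0\left(\tfrac{\xi}{\theta_k}\right)\right|d\xi
\le\theta_k^{-n}\int_{\R^n}\left|\widehat{u}_0\left(\tfrac{\xi}{\theta_k}\right)\right|d\xi
=\|\widehat{u}_0\|_{L^1}.
\]
Summing the core, annulus, and far-field contributions gives $\int_{\R^n}|\widehat{\CJ_ku_0}(\xi/\theta_k)|\,d\xi\le C(\|u_0\|_{L^1}+\|\widehat{u}_0\|_{L^1})$, and the Fourier inversion formula, after the change of variables turning the left side into $\theta_k^{n}\|\widehat{\CJ_ku_0}\|_{L^1}$, yields $\|\CJ_ku_0\|_{L^\infty}\lesssim\theta_k^{-n}(\|u_0\|_{L^1}+\|\widehat{u}_0\|_{L^1})=(kL(k))^{-n/\sigma}(\cdots)$; interpolating down to $L^p$ gives the stated $k$-bound. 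Finally, $R_k:=C(\|u_0\|_{L^1}+\|\widehat{u}_0\|_{L^1})(kL(k))^{-\beta}$ is a regular varying sequence of index $-\beta$ (its slowly varying part being $L^{-\beta}$, a power of a slowly varying function), so hypothesis (\ref{Hyp:H3}) of Theorem \ref{Thm:GenDecay}(ii) holds and the decay $\|u(t)\|_{L^p}\lesssim t^{-\beta}L(t)^{-\beta}=(tL(t))^{-\beta}$ follows immediately from Theorem \ref{Thm:Rk}. I expect the main obstacle to be exactly the uniform control of $L(\theta_k^{\gamma}|\xi|^{-\gamma})/L(k)$ from below: the troublesome region is where $L(m)$ is small relative to $L(k)$ — namely $m<k$ (an annulus of large $|\xi|$) when $L$ increases, handled by super-exponential-versus-volume, and $m>k$ (a bounded ball of small $|\xi|$) when $L$ decreases, handled by the boundedness of a decreasing $L$ — and it is the pairing of (\ref{Hyp:Mono}) with $\gamma>\sigma$ respectively $\gamma=\sigma$ that quarantines it.
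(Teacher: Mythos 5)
Your proof is correct and follows essentially the same route as the paper's: Fourier splitting at the scale $\theta_k=(kL(k))^{1/\sigma}$, eventual monotonicity of $L$ to dominate the rescaled symbol by $e^{-D|\xi|^\sigma}$ on the core region, the bound $(1-\delta)^k\le\theta_k^{-n}$ in the far field, Fourier inversion plus interpolation down to $L^p$, and finally Theorem \ref{Thm:GenDecay}(ii) with the regular varying sequence $R_k=C(kL(k))^{-\beta}$. The only deviations are local and equally valid: on the intermediate annulus in the increasing case the paper sacrifices an $\varepsilon$ to obtain a $k$-independent integrable majorant $e^{-D_2|\xi|^{\sigma-\varepsilon}}$, whereas you use the uniform bound $\exp(-Dc_0k^{1-\sigma/\gamma})$ times the annulus volume, and in the decreasing case the paper splits at $|\xi|=1$ rather than at $|\xi|=L(k)^{1/\sigma}$.
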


\begin{proof}
It is obvious that $\CJ_ku_0\in L^q(\R^n)$ for all $k$ and $1\leq q\leq\infty$. 
Since we are interested in the behavior of solution of (\ref{Eqn:main}) as $t\to\infty$ and $N$ can be chosen arbitrarily (independent of $t$), the values of $L$ on $(0,N_0)$ is irrelevant. By redefining the function, we can assume that $L$ is monotone on $(0,\infty)$.
If $\lim_{s\to\infty}L(s)$ is a finite positive number, then we have nothing to prove. So we will assume 
\begin{align}
\lim_{s\to\infty}L(s)=\begin{cases}
\infty&\mbox{if $L$ is increasing},\\
0&\mbox{if $L$ is decreasing}.
\end{cases}
\end{align}
By the hypothesis (\ref{Hyp:H6}) of $J$ and the Riemann-Lebesgue lemma, there are $\delta,D,\r_0>0$ such that
\begin{align*}
|\widehat{J}(\xi)|\leq\begin{cases}
\displaystyle
1-D|\xi|^\sigma L(|\xi|^{-\gamma})&|\xi|\leq \r_0,\\
\vspace{-10pt}\\
\displaystyle
1-\delta&|\xi|>\r_0,
\end{cases}
\end{align*}

\textbf{Case I: $L(s)\to\infty$.} For this case, $\gamma>\sigma$. We define
\[
\theta_k=(kL(k))^{1/\sigma}\quad\mbox{and}\quad\r_k=\r_0k^{-1/\gamma},\quad k=1,2,\ldots
\]

If $|\xi|\leq\r_k\theta_k$ then $(|\xi|/\theta_k)^{-\gamma}\geq\r_k^{-\gamma}=\r_0^{-\gamma}k$. Since $L$ is increasing and is slowly varying, we have
\[
L\left(\left(|\xi|/\theta_k\right)^{-\gamma}\right)\geq L(\r_0^{-\gamma}k)\sim L(k),\quad\mbox{as $k\to\infty$}.
\]
Thus for all $k$ sufficiently large, we have
\begin{align*}
\left|\widehat{J}_k\left(\frac{\xi}{\theta_k}\right)\right|&\leq\left(1-D\frac{|\xi|^\sigma}{kL(k)}L\left(\left(|\xi|/\theta_k\right)^{-\gamma}\right)\right)^k\\
&\leq\left(1-D_1\frac{|\xi|^\sigma}{k}\right)^k\leq e^{-D_1|\xi|^\sigma},
\end{align*}
by (\ref{Tool:linexp}).\medskip

Let $0<\varepsilon<\sigma$. 
If $\r_k\theta_k<|\xi|\leq\r_0\theta_k$ then $(|\xi|/\theta_k)^{-\gamma}\geq\r_0^{-\gamma}$ and 
\[
\frac{|\xi|^\varepsilon}{L(k)}\geq\frac{(\r_k\theta_k)^\varepsilon}{L(k)}=\r_0^{\varepsilon}k^{(1/\sigma-1/\gamma)\varepsilon}L(k)^{\varepsilon/\sigma-1}\geq c>0,
\]
since $k^{(1/\sigma-1/\gamma)\varepsilon}L(k)^{\varepsilon/\sigma-1}$ is regular varying with positive index. Hence
\begin{align*}
\left|\widehat{J}_k\left(\frac{\xi}{\theta_k}\right)\right|&\leq\left(1-D\frac{|\xi|^{\sigma-\varepsilon}|\xi|^\varepsilon}{kL(k)}L(\r_0^{-\gamma})\right)^k\\
&\leq\left(1-D_2\frac{|\xi|^{\sigma-\varepsilon}}{k}\right)^k\\
&\leq e^{-D_2|\xi|^{\sigma-\varepsilon}}.
\end{align*}

Finally, if $|\xi|>\r_0\theta_k$ then we have
\begin{align*}
\left|\widehat{J}_k\left(\frac{\xi}{\theta_k}\right)\right|&\leq(1-\delta)^k\leq(kL(k))^{-n/\sigma}=\theta_k^{-n},
\end{align*}
for all $k$ sufficiently large. Here we have used that $L$ is slowly varying, so $(\alpha_1 \sqrt{k})^{1/k}\leq(kL(k))^{1/k}\leq (\alpha_2k^2)^{1/k}$ for some constants $\alpha_1,\alpha_2>0$, and
\[
\lim_{k\to\infty}(\alpha_1\sqrt{k})^{1/k}=\lim_{k\to\infty}(\alpha_2k^2)^{1/k}=1\quad\therefore\,(kL(k))^{1/k}\to1,
\]
as $k\to\infty$.\medskip

Combining the above calculations we now get, for $k$ sufficiently large, that
\begin{align*}
\int_{\R^n}\left|\widehat{\CJ_ku_0}\left(\frac{\xi}{\theta_k}\right)\right|d\xi&=\int_{\R^n}\left|\widehat{J}_k\left(\frac{\xi}{\theta_k}\right)\cdot\widehat{u}_0\left(\frac{\xi}{\theta_k}\right)\right|d\xi\\
&=\int_{|\xi|\leq \r_k\theta_k}\left|\cdots\right|d\xi+\int_{\r_k\theta_k<|\xi|\leq \r_0\theta_k}|\cdots|d\xi
+\int_{|\xi|>\r_0\theta_k}|\cdots|d\xi\\
&\leq\|\widehat{u}_0\|_{L^\infty}\int_{\R^n}e^{-D_1|\xi|^\sigma}+e^{-D_2|\xi|^{\sigma-\varepsilon}}d\xi+\theta_k^{-n}\int_{\R^n}|\widehat{u}_0(\xi/\theta_k)|d\xi\\
&=C(\|u_0\|_{L^1}+\|\widehat{u}_0\|_{L^1})<\infty.
\end{align*}
Applying Hausdorff-Young inequality then we get
\begin{align*}
(kL(k))^{n/\sigma}\|\CJ_ku_0\|_{L^\infty}&\lesssim\theta_k^n\int_{\R^n}\left|\widehat{\CJ_ku_0}(\xi)\right|d\xi\\
&=\int_{\R^n}\left|\widehat{\CJ_ku_0}\left(\frac{\xi}{\theta_k}\right)\right|d\xi\leq C(\|u_0\|_{L^1}+\|\widehat{u}_0\|_{L^1}).
\end{align*}
Therefore
\begin{align*}
\|\CJ_ku_0\|_{L^\infty}\leq C(\|u_0\|_{L^1}+\|\widehat{u}_0\|_{L^1})(kL(k))^{-n/\sigma}
\end{align*}
By interpolation we also get
\begin{align*}
\|\CJ_ku_0\|_{L^p}\leq C(kL(k))^{-\frac{n}{\sigma}(1-\frac{1}{p})}=C(kL(k))^{-\beta}
\end{align*}
Using Theorem \ref{Thm:GenDecay} (ii), it follows that
\[
\|u(t)\|_{L^p}\lesssim(tL(t))^{-\beta}\quad\mbox{as $t\to\infty$}.
\]

\textbf{Case II: $L(s)\to0$.} For this case $\gamma=\sigma$. We employ a similar argument as in the proof of Theorem \ref{Thm:Jksigmaln}. Use $\theta_k=(kL(k))^{1/\sigma}$ as in the previous case. Let us split $\R^n$ into
\begin{align*}
\{|\xi|\leq1\},\quad\{1<|\xi|\leq\r_0\theta_k\},\quad\{|\xi|>\r_0\theta_k\}.
\end{align*}
If $|\xi|\leq1$, we employ the estimate $|\widehat{J}_k(\xi/\theta_k)|\leq1$. Assume $1<|\xi|\leq\r_0\theta_k$. Then
\[
L((\theta_k/|\xi|)^\gamma)\geq L(\theta_k^\gamma)=L(kL(k)),
\]
where we have used that $L$ is now decreasing.
Since $L(k)\to0$ as $k\to\infty$ and $L$ is decreasing, it follows that
\[
L((\theta_k/|\xi|)^\gamma)\geq L(k)\quad\mbox{for all $k$ sufficiently large}.
\]
So, in this case,
\begin{align*}
\left|\widehat{J}_k\left(\frac{\xi}{\theta_k}\right)\right|&\leq\left(1-D\frac{|\xi|^\sigma}{kL(k)}L\left((\theta_k/|\xi|)^\gamma\right)\right)^k\\
&\leq\left(1-D\frac{|\xi|^\sigma}{k}\right)^k\leq e^{-D|\xi|^\sigma}.
\end{align*}
Finally, if $|\xi|>\r_0\theta_k$, then we have, as in the previous case, that
\begin{align*}
\left|\widehat{J}_k\left(\frac{\xi}{\theta_k}\right)\right|&\leq\theta_k^{-n},
\end{align*}
for all $k$ sufficiently large.\medskip

Combining the preceding calculations, we get
\begin{align*}
\int_{\R^n}\left|\widehat{\CJ_ku_0}\left(\frac{\xi}{\theta_k}\right)\right|d\xi&=\int_{|\xi|\leq1}|\cdots|d\xi+\int_{1<|\xi|\leq\r_0\theta_k}|\cdots|d\xi+\int_{|\xi|>\r_0\theta_k}|\cdots|d\xi\\
&\leq C\|\widehat{u}_0\|_{L^\infty}\left\{\omg_n+\int_{\R^n}e^{-D|\xi|^\sigma}d\xi\right\}+\int_{\R^n}\theta_k^{-n}|\widehat{u}_0(\xi/\theta_k)|d\xi\\
&\leq C(\|u_0\|_{L^1}+\|\widehat{u}_0\|_{L^1})<\infty
\end{align*}
hence
\[
\|\CJ_ku_0\|_{L^\infty}\lesssim\|\CJ_ku_0\|_{L^1}\lesssim(kL(k))^{1/\sigma},
\]
for all $k$ sufficiently large. Invoking Theorem \ref{Thm:GenDecay} (ii), we obtain the desired asymptotic behavior of $u(t)$.\QED

\end{proof}


\begin{thebibliography}{00}

\bibitem{Alfaro17}
M.\ Alfaro, Fujita blow up phenomena and hair trigger effect: the role of dispersal tails, Ann.\ Inst.\ H.\ Poincar
'e Anal.\ Non Lin\'eaire, 34(5), 2017, 1309--1327.


\bibitem{AndreuEtal10}
F.\ Andreu, J.M.\ Maz\'on, J.D.\ Rossi, J.\ Toledo, Nonlocal Diffusion Problems, Mathematical Surveys and Monographs, 165, American Mathematical Society, Providence; Real Sociedad Matem\'atica Espa\~{n}ola, Madrid, 2010.

\bibitem{BatesEtal99}
P.W.\ Bates, A.\ Chmaj, A discrete convolution model for phase transitions, Arch.\
Rational Mech.\ Anal.\ 150(4), 281–-305 (1999).

\bibitem{BealsWong10}
R.\ Beals, R.\ Wong, Special functions, A graduate text, Cambridge Studies in Advanced Mathematics 126, CUP, Cambridge, 2010.

\bibitem{BinghamEtal83}
N.H.\ Bingham, J.\ Hawkes, Some 
limit theorems for occupation times, In: Probability, Statistics \& Analysis (eds.\ 
J.F.C.\ Kingman \& G.E.H.\ Reuter), pp.\ 46--62.\ London Math.\ Soc.\ Lecture 
Notes, vol.\ 79, CUP.\ 1983. 


\bibitem{Bingham89}
N.\ Bingham, C.\ Goldie, J.\ Teugels, Regular Variation, Cambridge University Press (1989).

\bibitem{BrandleEtal11}
C.\ Br\"andle, E.\ Chasseigne, R.\ Ferreira, Unbounded solutions of the nonlocal heat equation, Commun.\ Pure Appl.\ Anal.\ 10 (2011), no. 6, 1663–-1686

\bibitem{BrascampLieb76}
H.J.\ Brascamp, E.H.\ Lieb, Best constants in Young’s inequality, its converse
and its generalization to more than three functions, Adv.\ Math.\ 20 (1976) 151--173.

\bibitem{Caravenna12}
F.\ Caravenna, A note on directly Riemann integrable functions, arXiv: 12102361v1 (2012).

\bibitem{ChasseigneEtal06}
E.\ Chasseigne, M.\ Chaves, J.D.\ Rossi,  Asymptotic behavior for nonlocal diffusion equations, J.\ Math.\ Pures Appl.\ 86 (2006) 271–-291.


\bibitem{ChasseigneEtal14}
E.\ Chasseigne, P.\ Felmer, J.D.\ Rossi, E.\ Topp, Fractional decay bounds for nonlocal zero order
heat equations, Bull London Math.\ Soc., 46 (2014), 943--952.






\bibitem{Feller71}
W.\ Feller, An Introduction to Probability Theory and Its Applications, vol.\ 2, 2nd ed. (1971), John Wiley and Sons, New York.

\bibitem{Fife03}
P.\ Fife, Some nonclassical trends in parabolic and parabolic-like evolutions, Trends in Nonlinear Analysis, 153-–191. Springer, Berlin, 2003.

\bibitem{GilboaEtal08}
G.\ Gilboa, S.\ Osher, Nonlocal operators with application to image processing, Multiscale Model.\ Simul.\ 7(3), 1005-–1028 (2008).


\bibitem{IgnatEtal08}
L.I.\ Ignat, J.D.\ Rossi, Refined asymptotic expansions for nonlocal diffusion equations, J.\ Evol.\ Equ.\ 8(4) (2008), 617-–629.


\bibitem{IgnatRossi09}
L.\ Ignat and J.D.\ Rossi, Decay estimates for nonlocal problems via energy methods, J.\ Math.\ Pures Appl.\ (9) 92 (2009) 163-–187.


\bibitem{Karamata30}
J.\ Karamata, Sur un mode de croissance r\'eguli\`ere des fonctions, Mathematica (Cluj) 4 (1930) 38–-53.


\bibitem{KonMolVain17}
Y.\ Kondratiev, S.\ Molchanov, B.\ Vainberg, Spectral analysis of nonlocal Schrödinger operators, J.\ Funct.\ Anal.\, 273 (2017), 1020--1048.


\bibitem{LiebLoss01}
E.H.\ Lieb, M.\ Loss, Analysis 
(second ed.), Grad.\ Stud.\ Math., vol.\  14, Amer.\ Math.\ Soc., Providence, RI (2001)


\bibitem{TricErde51}
F.G.\ Tricomi, A.\ Erd\'elyi, 
The asymptotic expansion of a ratio of gamma functions, Pacific J.\ Math., 1 (1) (1951), 133--142.


\end{thebibliography}
\end{document}